\definecolor{darkgreen}{rgb}{0,0.5,0}
\definecolor{darkred}{rgb}{0.7,0,0}
\newcommand{\N}{\mathbb{N}}
\newcommand{\Z}{\mathbb{Z}}
 \newcommand{\R}{\mathbb{R}}
\newcommand{\eps}{\varepsilon}
\renewcommand{\th}{\theta}
\newcommand{\vphi}{\varphi}
\newcommand{\half}{\tfrac{1}{2}}
\newcommand{\de}{\delta}
\newcommand{\al}{\alpha}
\newcommand{\be}{\beta}
\newcommand{\Om}{\Omega}
\newcommand{\om}{\omega}
\newcommand{\la}{\lambda}
\newcommand{\La}{\Lambda}
\newcommand{\LP}{{\mathrm{P}}}
\DeclareMathOperator*{\cosec}{cosec}
\let\d\relax
\newcommand{\d}{\partial}
\newcommand{\x}{{\bar x}}
\newcommand{\Gcone}{{\Gamma_{\textup{cone}}}}
\newcommand{\Gball}{{\Gamma_{\textup{ball}}}}
\newcommand{\gsymm}{{\gamma_{\textup{symm}}}}
\newcommand{\gcone}{{\gamma_{\textup{cone}}}}
\newcommand{\gball}{{\gamma_{\textup{ball}}}}
\newcommand{\dist}{{\textrm{dist}}}
\newcommand{\beq}{\begin{equation}}
\newcommand{\eeq}{\end{equation}}
\newcommand{\beqs}{\begin{equation*}}
\newcommand{\eeqs}{\end{equation*}}
\newcommand{\beqa}{\begin{equation}\begin{aligned}}
\newcommand{\eeqa}{\end{aligned}\end{equation}}
\newcommand{\beqas}{\begin{equation*}\begin{aligned}}
\newcommand{\eeqas}{\end{aligned}\end{equation*}}
\theoremstyle{plain}
\newtheorem{lemma}{Lemma}[section]
\newtheorem{thm}[lemma]{Theorem}
\theoremstyle{definition}
\newtheorem{rmk}[lemma]{Remark}
\numberwithin{equation}{section}
\title[Elliptic equations on conical domains]{{\sc
Oblique derivative problems for elliptic equations on conical domains}
\\ 
}
\author[Matthew R. I. Schrecker]{Matthew R. I. Schrecker}\thanks{Department of Mathematics, University College London, London, UK\\
\indent Email: m.schrecker@ucl.ac.uk}
\begin{document}
 \maketitle
 \begin{abstract}
 We study the oblique derivative problem for uniformly elliptic equations on cone domains. Under the assumption of axi-symmetry of the solution, we find sufficient conditions on the angle of the oblique vector for H\"older regularity of the gradient to hold up to the vertex of the cone. The proof of regularity is based on the application of carefully constructed barrier methods or via perturbative arguments. In the case that such regularity does not hold, we give explicit counterexamples. We also give a counterexample to regularity in the absence of axi-symmetry. Unlike in the equivalent two dimensional problem, the gradient H\"older regularity does not hold for all axi-symmetric solutions, but rather the qualitative regularity properties depend on both the opening angle of the cone and the angle of the oblique vector in the boundary condition. 
 \end{abstract}
 
 \section{Introduction}
 
 The aim of this paper is to study the regularity up to the boundary for solutions of oblique derivative problems for uniformly elliptic equations on domains with conical singularities. As well as being of interest from the point of view of elliptic PDE theory, oblique derivative problems on cone domains arise naturally in a range of important physical problems, such as shock reflection problems in gas dynamics. The basic H\"older regularity of solutions has been known since the work of Miller \cite{Miller}, who initially derived suitable barrier functions on cone domains. In general, on domains with cone singularities, one cannot expect that the gradient of the solution will remain H\"older continuous up to the boundary of the domain, but under a symmetry assumption on the solution (axi-symmetry), the situation becomes more subtle. In fact, as we will show below, for these symmetric solutions, the gradient H\"older regularity (or lack of it) depends on a relationship between the opening angle of the cone at its vertex  and the angle of the oblique vector in the boundary condition. Such a relationship is somewhat surprising given the theory for the equivalent two-dimensional problem (where the cone is replaced by a wedge). For such problems, reflectional symmetry of the solution is sufficient to guarantee the H\"older regularity of the gradient up to the vertex with no restriction on the angle of the oblique vector (beyond the necessity of obliqueness). This regularity theory for symmetric solutions of oblique problems in two dimensions played a crucial role in the recent resolution of the shock reflection problem for potential flow past a wedge by Chen and Feldman \cite{CF}, and so the results contained in this paper may be expected to play a similarly important role in the three dimensional theory.
 
 To fix ideas, we work with the equation
 \beq\label{eq:fullspace}
 \begin{cases}
 Lu:=A^{ij}\d_{ij}u+A^{i}\d_iu+A^0u=f &\text{ in }\Om,\\
 Mu:=\be\cdot Du+\be^0u=g & \text{ on }\d\Om,
 \end{cases}
 \eeq
 where $\Omega\subset\R^n$ is a bounded, Lipschitz domain satisfying an exterior cone condition at every point of its boundary, $A^{ij},A^i,A^0:\Om\to\R$. We assume the existence of constants $\la,\La>0$ such that the principal coefficients $A^{ij}$ satisfy the uniform ellipticity assumption
 \beqa
 \la|\xi|^2\leq A^{ij}(x)\xi_i\xi_j\leq \La|\xi|^2 \text{ for all }\xi\in\R^n,\:x\in\Om.
 \eeqa
 Further regularity assumptions on the coefficients will be stated below in Theorem \ref{thm:main}. The vector field $\be$ is assumed to be piecewise smooth on $\d\Om$, inward pointing and oblique at all of its points of continuity. Throughout this paper, we use the following sense of the term obliqueness: a vector $\be$ is said to be oblique at a point $x_0\in\d\Om$ if there is an orthonormal coordinate system $(x',x_n)$ based at $x_0$, a positive radius $\rho>0$, and a Lipschitz function $F$ such that
 $$\Om\cap B_\rho(x_0)=\{x_n>F(x')\,|\,|x|<\rho\}$$
 such that $\be$ is parallel to the $x_n$ axis. Note that this definition may be weakened in certain directions, see for example the book of Lieberman \cite{L}.
 
 As the main issue that this paper will be concerned with is the regularity at cone points of the boundary for solutions satisfying a rotational symmetry, we will assume for simplicity of notation that $\Om$ is the intersection of a ball of fixed radius $R>0$ with a fixed cone $\mathcal{C}$ with vertex at the origin and axis of symmetry the $x_n$ axis. Following the terminology of Miller \cite{Miller}, we define polar coordinates such that 
 $$r=|x|,\quad r\cos\th=x_n,$$
 and then let the open cone be
 $$\mathcal{C}=\{(r,\th)\in(0,\infty)\times[0,\th_0)\}$$
for some fixed $\th_0\in(0,\pi)$.  Our domain is then 
$$\Om=\mathcal{C}\cap B_R(0)=\{(r,\th)\in(0,R)\times[0,\th_0)\}.$$
In general, we will write 
$$\Om[\rho]=\Om\cap B_\rho(0)\quad \text{ for }0<\rho<R.$$
We write $\Gcone$ and $\Gball$ for the portions of $\d\Om$ as follows:
\beqs
\Gcone=\{0<r<R,\,\th=\th_0\},\quad \Gball=\{r=R,\,\th\in[0,\th_0)\}.
\eeqs
Note that both portions are relatively open, so that $\d\Om=\overline\Gcone\cup\overline\Gball$.

The assumption of axi-symmetry that we will make requires certain compatibility conditions on the coefficients in order that the rotational solution can exist in the first place. For given examples, this is typically easy to compute in cylindrical coordinates. The cylindrical coordinates are the following:
$$y_1=x_n, \quad y_2=|x'|,\quad \phi\in \mathbb{S}^{n-2},$$
where $\phi$ is a standard coordinate system on $\mathbb{S}^{n-2}$. Note then that $(r,\th)$ coincide with the polar coordinates on the half space $\{(y_1,y_2)\,|\,y_2\geq 0\}$: $r=|(y_1,y_2)|$ and
$$y_1=r\cos\th,\quad y_2=r\sin\th.$$
 Axi-symmetry of the solution means that $u=u(y)$ depends only on the axi-symmetric coordinates $(y_1,y_2)$, i.e.~is independent of $\phi$, and we may therefore work equivalently on the two-dimensional domain 
$$\om=\om[R]=\{y\,|\,r(y)\in(0, R),\,\,\th(y)\in[0,\th_0)\}.$$
We define three (relatively open) boundary portions for $\om$:
$$\gcone=\{r\in(0,R),\th=\th_0\},\quad\gsymm=\{r\in(0,R),\th=0\},\quad\gball=\{r=R,\th\in(0,\th_0)\}.$$
In the $y$ coordinates, we obtain the equation
\beq
\begin{cases}
a^{ij}\d_{ij}u+b^i\d_iu+cu=f &\text{ in }\om,\\
\be\cdot Du+\be^0u=g &\text{ on }\gcone\cup\gball,\\
u_{y_2}=0 &\text{ on }\gsymm,
\end{cases}
\eeq
where the boundary operator $\be$, $\be^0$ (assumed axi-symmetric) is defined in the obvious way,
\beqa
&a^{11}=A^{nn},\quad
a^{12}=a^{21}=\sum_{i=1}^{n-1}A^{in}\frac{x_i}{y_2},\quad
a^{22}=\sum_{i,j=1}^{n-1}A^{ij}\frac{x_ix_j}{y_2^2},\\
&b^1=A^n,\quad b^2=\frac{1}{y_2}\big(\sum_{i=1}^{n-1}A^{ii}-\sum_{i,j=1}^{n-1}A^{ij}\frac{x_ix_j}{y_2^2}\big)+\sum_{i=1}^{n-1}A^i\frac{x_i}{y_2},\quad c=A^0.
\eeqa
Uniform ellipticity of the coefficients $a^{ij}$ is inherited from that of $A^{ij}$. Indeed, the assumption that \eqref{eq:fullspace} admits a rotationally symmetric solution allows us to take the coefficients $a^{ij}$ to be independent of $\phi$ and the ellipticity is straightforward to check. A significant role in our analysis will be played by the singular coefficient $b^2$, arising from the use of symmetry to reduce the problem to a two-dimensional one. In general, coefficients of the form $b^i$ satisfying only a bound $|b^i|\leq \frac{C}{y_2}$ are not suitable for the application of the methods in this paper. For the treatment of equations with coefficients of this degree of singularity for the Dirichlet problem, see the work of Fichera \cite{Fichera}, Lieberman \cite{L08}, Michael \cite{Michael} and the references therein. That this coefficient arises from a symmetry reduction of dimension is crucial. We therefore split the first order (singular) coefficient $b^2$ into two pieces:
\beq
b^{2,1}=\sum_{i=1}^{n-1}A^{ii}-\sum_{i,j=1}^{n-1}A^{ij}\frac{x_ix_j}{y_2^2},\quad b^{2,2}=b^2-\frac{b^{2,1}}{y_2}.
\eeq
We see that only the principal part, $b^{2,1}$, depends on the principal coefficients of the equation and that $b^{2,1}$ and the remainder, $b^{2,2}$, satisfy the estimates
\beq
0<(n-2)\frac{\la}{y_2}\leq \frac{b^{2,1}}{y_2}\leq (n-2)\frac{\La}{y_2},\quad |b^{2,2}|\leq \Big(\sum_{i=1}^{n-1}A_i^2\Big)^{\frac{1}{2}}.
\eeq 
We also make the technical assumption, satisfied by many physically motivated problems, that at the cone vertex, the constant coefficient operator $\overline{L}_0$ defined on $\Omega$ by
\beq\label{eq:L0bar}
\overline{L}_0=A^{ij}(0)\d_{ij}\quad \text{ is invariant under rotations around the axis of symmetry}.
\eeq
The oblique vector $\be$ is always taken to be inward pointing and (without loss of generality) such that
$$\lim_{\substack{y\to0\\y\in\gcone}}\be=(\cos(s),\sin(s)) \text{ for some }s\in(-\pi+\th_0,\th_0).$$
Note that this limit only makes sense in the $y$ coordinates as $\be$ is generally discontinuous at the origin when considered on $\Gcone$ (for example, the unit normal to $\Gcone$).

We are now in a position to state a rough version of our main theorem. 

\begin{thm}\label{thm:rough}
Suppose that the coefficients $A^{ij}$, $A^i$, $A^0$ of \eqref{eq:fullspace} satisfy \eqref{eq:L0bar} and are H\"older continuous in $\Om$, that $\be$ and $\be^0$ are H\"older continuous on each of the smooth portions of $\d\Om$ with $\be$ uniformly oblique and inward pointing, and that the problem admits an axi-symmetric solution $u$ in the sense described above. Finally, suppose $f$ and $g$ are H\"older continuous. Then there exists $s_1\in(-\frac{\pi}{2},0)$ such that if 
$$s\in\big((-\pi,-\frac{\pi}{2})\cup(s_1,\frac{\pi}{2})\big)\cap(-\pi+\th_0,\th_0),$$ 
 then the gradient of $u$ satisfies an \textit{a priori} H\"older estimate up to the vertex of the cone in terms of the H\"older norms of the data and coefficients.
\end{thm}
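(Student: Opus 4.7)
My plan is to combine a frozen-coefficient reduction at the cone vertex with an explicit spectral analysis on the cross-section, then use barriers built from the resulting eigenfunctions to obtain a pointwise decay and iterate via Schauder theory. By the invariance assumption \eqref{eq:L0bar}, after a linear change of variables I may assume $A^{ij}(0) = \de^{ij}$, so that restricted to axi-symmetric functions the frozen operator at the origin becomes, in the $y$ coordinates,
\beq
\widetilde L_0 u = \d_{y_1 y_1} u + \d_{y_2 y_2} u + \tfrac{n-2}{y_2}\d_{y_2} u,
\eeq
whose singular first-order term is precisely the leading part of $b^{2,1}/y_2$.

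The heart of the argument is the eigenvalue problem for homogeneous solutions $u = r^\al \Phi(\th)$ of the frozen model $\widetilde L_0 u = 0$ in $\mathcal{C}$ with $\cos(s)\d_{y_1}u + \sin(s)\d_{y_2}u = 0$ on $\gcone$ and axi-symmetry across $\gsymm$. Separation yields the Gegenbauer equation
\beq
(\sin^{n-2}\th\, \Phi'(\th))' + \al(\al+n-2)\sin^{n-2}\th\, \Phi(\th) = 0, \qquad \th\in(0,\th_0),
\eeq
together with $\Phi'(0)=0$ and $\al\cos(s-\th_0)\Phi(\th_0) + \sin(s-\th_0)\Phi'(\th_0) = 0$. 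A direct check shows that $\Phi(\th)=\cos\th$ is an eigenfunction at $\al=1$, satisfying the boundary condition exactly when $\cos s = 0$. Letting $\al_1(s,\th_0,n)$ denote the smallest positive eigenvalue, I define $s_1$ as the largest number in $(-\pi/2,0)$ with $\al_1=1$. A Sturm-comparison argument then yields monotone dependence of $\al_1$ on $s$, giving $\al_1(s,\th_0,n) \ge 1+\mu$ for some $\mu > 0$ precisely in the range asserted by the theorem.

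Given this spectral gap, I build barriers of the form $w = r^{1+\mu}\Phi(\th) + (\text{lower order correctors})$, where the correctors absorb the H\"older oscillation of $a^{ij}, b^i, c$ from their frozen values, the bounded remainder $b^{2,2}$, and the angular deviation of $\be$ from its limit. Comparing $\pm(u - u(0) - p\cdot y)$ against $\pm w$ on $\om[\rho]$ via the maximum principle---exploiting that $b^{2,1}/y_2$ has a definite sign compatible with the Neumann-type condition $u_{y_2}=0$ on $\gsymm$---yields the pointwise bound $|u(y) - u(0) - p\cdot y|\le C r^{1+\mu}$. A dyadic rescaling combined with boundary Schauder estimates on the annuli $\{r\sim\rho\}\cap\om$, where the axis singularity is treated by the weighted theory of Michael \cite{Michael} and Lieberman \cite{L08}, then upgrades this to the claimed $C^{1,\al}$ estimate at the vertex.

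The principal obstacle is the spectral analysis. Unlike the two-dimensional wedge case, where the reduced ODE has constant coefficients and all eigenvalues are explicit trigonometric expressions, here the axi-symmetric reduction produces regular singular points at both endpoints of $[0,\th_0]$ and the eigenfunctions are Gegenbauer/associated Legendre functions of non-integer index. Establishing monotonicity of $\al_1$ in $s$ and showing that the threshold $s_1$ is strictly larger than $-\pi/2$ for generic $\th_0$ and $n$---precisely the departure from two dimensions that produces both the main theorem and the counterexamples promised in the abstract---requires careful comparison arguments with these special functions. A secondary technical difficulty is engineering the barriers so that $\d_{y_2} w$ has the right sign on $\gsymm$, which is possible only thanks to the axi-symmetry.
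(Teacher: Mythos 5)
Your proposal replaces the paper's two-pronged strategy (Miller barriers applied to derivative combinations of $u$ for $\cos s\sin s>0$, plus a perturbative absorption argument near $s=0$) with a Kondrat'ev-type spectral analysis of the frozen angular operator and eigenfunction barriers. That is a legitimately different route, and the separated boundary condition you derive and the observation that $\Phi=\cos\theta$ gives $\alpha=1$ when $\cos s=0$ are correct. However, there are two gaps that I do not think you can close without substantially redesigning the argument.

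First, the monotonicity claim for $\alpha_1(s)$ is internally inconsistent with the range you are trying to prove. If $\alpha_1$ were monotone in $s$, the set $\{\alpha_1>1\}$ would be a single half-interval, whereas the range in the theorem is the disconnected set $\big((-\pi,-\tfrac{\pi}{2})\cup(s_1,\tfrac{\pi}{2})\big)\cap(-\pi+\theta_0,\theta_0)$, and the paper's counterexamples (Theorem~\ref{thm:counterexample}) show $\alpha_1$ dips below $1$ precisely in the complementary intermediate band. So $\alpha_1(s)$ is not monotone over the range of admissible $s$, a Sturm-comparison argument in $s$ needs considerably more care (the Robin parameter $\sin(s-\theta_0)/\cos(s-\theta_0)$ blows up at $s=\theta_0\pm\tfrac{\pi}{2}$, which lies inside the admissible interval), and the word ``precisely'' asserts a sharp threshold that even the paper does not claim.

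Second, and more seriously, the ``lower order correctors'' that are supposed to make the eigenfunction $r^{1+\mu}\Phi(\theta)$ a supersolution for the variable-coefficient operator are not constructed, and this is not a routine step. The eigenfunction of the frozen operator is a solution (not a strict supersolution) for $\widetilde L_0$, so it has no margin to absorb the H\"older-size perturbations $(a^{ij}-a^{ij}_0)\partial_{ij}$, $b^{2,2}\partial_2$, the deviation of $\beta$ from $\beta_0$, and so on; you would have to engineer a strict supersolution with a definite negative excess both in the interior and in the oblique boundary operator, uniformly for all matrices with the given ellipticity bounds. This is exactly the problem the Miller barrier $v_\alpha=r^\alpha F_\alpha(\theta)$ solves: it is a subsolution simultaneously for every constant-coefficient operator with ellipticity in $[\lambda,\Lambda]$ and produces strict negativity $M_1 v_\alpha\le -c_1 |y|^{\alpha-1}$ in the boundary operator (Lemma~\ref{lemma:Miller}). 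The paper also applies the barrier to the derivatives $v_1=(u-V)_{y_1}$ and $v_1+\eps v_2$ rather than to $u-u(0)-p\cdot y$, which avoids having to know the linear part $p$ in advance and lets the singular $b_0^{2,1}/y_2^2$ zero-order term enter with a favorable sign. Finally, you flag but do not resolve the sign of $\partial_{y_2}$ of the barrier on $\gsymm$; the paper sidesteps this by working on the full rotated domain $\Omega$ in its comparison principle (Theorem~\ref{thm:comparison}). Without a concrete corrector construction and a correct accounting of how $\alpha_1$ varies across the full range of $s$, the proposal remains a plausible outline rather than a proof.
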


The requirement $s\in(-\pi+\th_0,\th_0)$ is equivalent to the choice of $\be$ as inward-pointing. However, the restriction on $s$ within this interval is necessary, due to the following theorem.

\begin{thm}\label{thm:counterexample}
Consider the oblique derivative problem for the Laplace equation on a cone:
\beq\label{eq:laplace}
\begin{cases}
\Delta u=0 &\text{ in }\Om,\\
\be\cdot Du=0 &\text{ on }\Gcone,
\end{cases}
\eeq 
where $\beta=\sin(s)\d_{r'}+\cos(s)\d_{x_n}$, $r'=|x'|$, corresponds to a constant vector $\be=(\cos(s),\sin(s))$ on $\gcone$, $s\in(-\pi+\th_0,\th_0)$.
\begin{itemize}
\item[(i)] Suppose that $\th_0\in(\frac{\pi}{2},\pi)$. There exists $s_0\in(-\pi+\th_0,0)$ depending on $\th_0$ such that if either $s\in(-\pi+\th_0,s_0)$ or $s\in(\frac{\pi}{2},\th_0)$, then there exists a H\"older continuous axi-symmetric solution of \eqref{eq:laplace} which is not $C^1$ up to the origin.
\item[(ii)] Suppose that $\th_0\in(0,\frac{\pi}{2})$. There exists $s_0\in(-\frac{\pi}{2},0)$ depending on $\th_0$ such that if $s\in(-\frac{\pi}{2},s_0)$, then there exists a H\"older continuous axi-symmetric solution of \eqref{eq:laplace} which is not $C^1$ up to the origin.
\end{itemize}
\end{thm}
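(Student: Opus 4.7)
The counterexamples will be constructed as separable axi-symmetric harmonic functions of the form $u(r,\th) = r^\nu \Phi_\nu(\th)$ with $\nu \in (0,1)$. Any such solution satisfies $|Du(r,\th)| \sim r^{\nu-1}$ as $r \to 0$, so it is H\"older continuous up to the vertex of the cone but fails to be $C^1$ there. Substituting into $\Delta u = 0$ reduces the equation to the Gegenbauer-type ODE
$$\Phi_\nu''(\th) + (n-2)\cot\th\,\Phi_\nu'(\th) + \nu(\nu + n - 2)\Phi_\nu(\th) = 0 \quad \text{on } (0, \th_0),$$
and I take $\Phi_\nu$ to be the unique (up to scaling) solution bounded at $\th = 0$, which in three dimensions is the Legendre function $P_\nu(\cos\th)$. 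Substituting into $\be \cdot Du = 0$ on $\Gcone$, after expressing $\d_{y_1}, \d_{y_2}$ in the polar coordinates $(r,\th)$, reduces the boundary condition to the single transcendental characteristic equation
$$F(\nu, s) := \nu\cos(\th_0 - s)\Phi_\nu(\th_0) - \sin(\th_0 - s)\Phi_\nu'(\th_0) = 0.$$

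Thus the theorem reduces to showing that, for $s$ in the stated ranges, $F(\,\cdot\,, s)$ admits a root $\nu \in (0, 1)$. Two observations drive the analysis. First, $\Phi_1(\th) = \cos\th$, hence a direct computation gives $F(1, s) = \cos s$, which vanishes precisely at $s = \pm\pi/2$. Second, at the tangential endpoints $s \to \th_0^-$ and $s \to (-\pi + \th_0)^+$, where $\sin(\th_0 - s) \to 0$, the characteristic equation degenerates to the Dirichlet eigenvalue problem $\Phi_\nu(\th_0) = 0$, whose principal eigenvalue $\nu_D(\th_0)$ satisfies $\nu_D(\th_0) < 1$ iff $\th_0 > \pi/2$ (a classical spherical-cap fact). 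Combining these with the real-analytic dependence of $\Phi_\nu$ on $\nu$ and the implicit function theorem at the non-degenerate roots $(\nu, s) = (1, \pm\pi/2)$ (where a direct calculation yields $\d_s F(1, \pm\pi/2) = \mp 1$), I will propagate continuous branches $s \mapsto \nu(s)$ of solutions. In case (i), $\th_0 > \pi/2$, the branch through $(1, \pi/2)$ extends up to $s \to \th_0^-$, where $\nu(s) \to \nu_D < 1$, producing counterexamples throughout $s \in (\pi/2, \th_0)$; a second branch from the tangential limit $s \to (-\pi + \th_0)^+$, where again $\nu \to \nu_D < 1$, yields counterexamples on $s \in (-\pi + \th_0, s_0)$ for an appropriate $s_0 \in (-\pi + \th_0, 0)$. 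In case (ii), $\th_0 < \pi/2$, only the root at $(1, -\pi/2)$ lies in the admissible range; I will continue this branch into $(-\pi/2, 0)$ and show $\nu(s) \in (0,1)$ on an initial subinterval $(-\pi/2, s_0)$ by a direct sign analysis of $\d_\nu F(1, -\pi/2)$.

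The main obstacle is making the branch-tracking argument precise, since the implicitly defined map $\nu \mapsto s(\nu)$ need not be monotone globally. The cleanest way around this is to invert the picture: for each $\nu \in (0, 1)$ the relation
$$\tan(\th_0 - s) = \frac{\nu\,\Phi_\nu(\th_0)}{\Phi_\nu'(\th_0)}$$
has a unique solution $s(\nu) \in (-\pi + \th_0, \th_0)$ (noting that $\th_0 - s$ ranges over $(0,\pi)$ and that $\tan$ is injective on this interval modulo the removable ambiguity at $0$), so proving the theorem amounts to showing that the image $\{s(\nu) : \nu \in (0, 1)\}$ covers the specified subintervals. Everything thus reduces to a study of the monotonicity and boundary behavior of the ratio $\Phi_\nu(\th_0)/\Phi_\nu'(\th_0)$ as $\nu$ varies over $(0, 1]$, which is amenable to Sturm-comparison arguments applied to the Gegenbauer ODE and to explicit series expansions near the endpoints $\nu = 0$ (using $\Phi_\nu = 1 + O(\nu)$) and $\nu = 1$ (using $\Phi_\nu = \cos\th + O(\nu - 1)$).
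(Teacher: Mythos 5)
Your ansatz and characteristic equation are the same as the paper's: separable axi-symmetric harmonics $r^\alpha\LP_\alpha(\cos\th)$, with the oblique condition reducing (in the paper's notation) to $B(\th_0,\al,s)=\cos(s)U_1(\th_0,\al)+\sin(s)U_2(\th_0,\al)=0$, which is proportional to your $F(\nu,s)$. You also correctly identify $F(1,s)=\cos s$ and the vanishing of $F$ at $\nu=0$, matching the paper's endpoint computations $B(\th_0,0,s)=0$ and $B(\th_0,1,s)=\cos s$.

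Where the proposal falls short is in the step that actually produces a root $\al\in(0,1)$. Your two proposed mechanisms --- continuation of branches off $(\nu,s)=(1,\pm\pi/2)$ via the implicit function theorem, and the inversion $\nu\mapsto s(\nu)$ via $\tan(\th_0-s)=\nu\Phi_\nu(\th_0)/\Phi_\nu'(\th_0)$ --- both hinge on a global monotonicity/boundary analysis of the ratio $\nu\Phi_\nu(\th_0)/\Phi_\nu'(\th_0)$ over $\nu\in(0,1]$ that you explicitly do not carry out (``amenable to Sturm-comparison arguments'' and ``explicit series expansions'' are announced, not executed). The branch continuation in particular needs $\d_\nu F$, not the $\d_sF$ you compute, and needs to rule out the branch exiting through $\nu=0$ or $\nu=1$ before reaching the tangential limits; the invocation of the ``classical spherical-cap fact'' $\nu_D(\th_0)<1\Leftrightarrow\th_0>\pi/2$ is also left as a black box. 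As it stands the argument is a plausible program, not a proof.

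The paper's route is both shorter and more elementary, and the piece you are missing is precisely the derivative of $B$ in $\al$ at the degenerate endpoint $\al=0$ (not at $\al=1$). Using the recursion of Szmytkowski \cite[(4.18)]{Sm06} for $\d_\al\LP_\al$, the paper computes $\d_\al U_1|_{\al=0}=1$ and $\d_\al U_2|_{\al=0}=\tfrac{1-\cos\th_0}{\sin\th_0}$, hence
\[
\frac{\d B}{\d\al}(\th_0,\al,s)\Big|_{\al=0}=\cos s+\sin s\,\frac{1-\cos\th_0}{\sin\th_0}=:V(\th_0,s).
\]
Since $B(\th_0,0,s)=0$, the sign of $V(\th_0,s)$ fixes the sign of $B$ for small $\al>0$, and comparing with $B(\th_0,1,s)=\cos s$ gives a root in $(0,1)$ by the intermediate value theorem whenever $V$ and $\cos s$ disagree in sign; the sign analysis of $V$ yields exactly the stated ranges of $s$. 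No branch tracking, Sturm comparison, or Dirichlet-eigenvalue input is needed. If you want to salvage your inversion scheme, the missing ingredient is this same formula for $\d_\al B|_{\al=0}$: it determines the limit $\lim_{\nu\to0^+}s(\nu)$ (namely $s_0=\th_0/2-\pi/2$, the root of $V$), which combined with $s(1)=\pm\pi/2$ would let you conclude by continuity --- but you would then still owe an argument that $s(\nu)$ is well-defined and continuous on all of $(0,1)$.
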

As we will see in \S\ref{sec:counter2}, these solutions are smooth away from the vertex; the loss of regularity is due to the angle of the oblique vector at the vertex and the opening angle of the cone.

To provide some context for these results, we compare the situation to that for the two-dimensional problem. In two dimensions, the equation is posed on the exterior of a wedge,
$$\tilde\Om=\{r\in(0,R),\,\th\in(-\th_0,\th_0)\},$$
with $(r,\th)$ being polar coordinates on $\R^2$. The coefficients and data are taken to be H\"older continuous as in Theorem \ref{thm:rough} above. The corresponding gradient regularity result for solutions with reflectional symmetry ($u(x_1,x_2)=u(x_1,-x_2)$) is then that for all $\th_0\in(0,\pi)$, there exists $\al\in(0,1)$ depending on $\th_0$ such that $u\in C^{1,\al}$ up the vertex, see \cite[Chapter 4]{CF} or \cite[Section 4.5]{L}. The key point here is that gradient H\"older regularity holds for all uniformly oblique vectors (again, only in the case of symmetric solutions). This is a stark contrast to the situation under consideration at present, where the regularity depends on a relationship between the opening angle of the cone and the oblique vector. 

Oblique derivative problems for elliptic equations arise naturally in a wide variety of physical situations such as the theory of reflected shock waves in transonic flow and the capillary problem. In the theory of transonic shocks, in certain circumstances, one can pose the Rankine-Hugoniot conditions across the shock as an oblique derivative condition for a potential function, for example in two dimensions by \v{C}ani\'c, Keyfitz and Lieberman, \cite{CKL}. Such a formulation of the Rankine-Hugoniot conditions is also used for the shock reflection problem for potential flow, solved recently by Chen and Feldman \cite{CF} in two dimensions. The capillary problem has also been studied widely, see for example the monograph of Finn, \cite{Finn}. 

Extensions of these results to three-dimensional domains require a more detailed understanding of the regularity of oblique derivative problems on domains such as cones. In particular, the results in this paper are suitable for application to the three-dimensional shock reflection problem from a cone. For this and other physical problems, the symmetry conditions that we impose here to study the oblique derivative problem are very natural. As mentioned above, if we do not have such symmetry assumptions, we cannot, in general, expect the H\"older regularity of the gradient. See Appendix \ref{sec:counter1} for a discussion of this and the construction of a counterexample.

From the point of view of the analysis of oblique derivative problems for elliptic equations, we mention in particular the early work of Fiorenza \cite{Fiorenza}. The theory of such problems was taken up in a series of papers by Lieberman, of which the most significant for our purposes here are \cite{L87,L88} which provide gradient H\"older regularity under the stronger condition that either the smooth portions of the boundary $\d\Om$ meet along co-dimension 2 hypersurfaces or that the vector field $\be$ is continuous (see also Lieberman's monograph \cite{L} which contains many details of the theory of oblique problems in a variety of settings). 

Concerning the H\"older regularity of the solution $u$, for domains with conical singularities, Miller \cite{Miller} constructed a barrier function for such regularity theory for the Dirichlet problem. We mention also the early result of Nadirashvili \cite{N} on smooth domains for the oblique derivative problem and the more recent work of Nadirashvili and Kenig \cite{KN} for oblique derivative problems on smooth domains with source terms $f\in L^p$. In general, we refer to \cite{L01} for the Harnack inequality and pointwise estimates (including H\"older estimates) of solutions of such problems on Lipschitz (or less regular) domains. For H\"older estimates for viscosity solutions of fully non-linear Neumann problems, we refer to the paper of Barles and Da Lio, \cite{BDL}.  The theory of elliptic equations with strongly singular lower order terms (such as the $y_2^{-1}$ term that we find here in the axi-symmetric coordinates) with Dirichlet data has been studied by Fichera \cite{Fichera} from the point of view of degenerate equations and also by Lieberman \cite{L08}, generalising the earlier work of Michael \cite{Michael}.

A precise statement of Theorem \ref{thm:rough} will be given below in \S\ref{sec:mainproofs} in two parts: Theorems \ref{thm:main} and \ref{thm:perturb}. The first of these two results covers the case $s\in(0,\frac{\pi}{2})\cup(-\pi,-\frac{\pi}{2})$, while the second is for $s\in(s_1,0]$. These theorems give precise \textit{a priori} H\"older estimates for the gradient of the solution $u$. We outline the strategy of proof for Theorem \ref{thm:main} as follows. As the intersection of the boundaries of the ball and the cone is a smooth set of co-dimension 2 in $\R^n$, the boundary regularity along this portion of the boundary fits into the framework of \cite{L88} (see also \cite{L} for an alternative exposition). We will therefore focus attention on the \textit{a priori} estimates locally around the vertex of the cone. We first construct the solutions to a pair of auxiliary problems, one to handle the source term $f$ and errors from the method of frozen coefficients, and the other to reduce to a problem with homogeneous boundary condition. Using Schauder type estimates for these auxiliary problems, we will then apply the barrier method, relying on carefully constructed barrier functions and the comparison principle for suitable problems solved by the derivatives of $u$, to show a growth condition on the gradient of the solution $u$ near the cone vertex. Finally, a standard scaling argument will convert this growth into the desired H\"older regularity. Theorem \ref{thm:perturb} is proven via a perturbative argument around the case of a continuous boundary operator (the case $s=0$).

In order to apply the barrier method to the derivatives of our solution $u$, we must find good derivatives or derivative combinations to estimate. The availability and choice of a good derivative in fact depends on the angle of $\be$. The reason for this is that when one derives an elliptic problem for the chosen derivative, the problem obtained (and, crucially, its associated boundary conditions) must be such that the comparison principle applies in order to make use of the barrier method. In particular, any zero order terms in the PDE or the oblique operator must come equipped with a good sign condition.

The outline of the paper is as follows. First, in \S\ref{sec:holder}, we give definitions and basic results for the (weighted) H\"older spaces in which we will work. With these definitions, we will be able to give more precise statements of the main theorem in the case that we have the positive result (gradient H\"older regularity up to the boundary). This is stated in Theorems \ref{thm:main} and \ref{thm:perturb}, which are proved in \S\ref{sec:mainproofs}. The proof relies on carefully constructed barrier functions, and so in \S\ref{sec:barrier} we give the construction of these barriers and relevant estimates of their directional derivatives that are used in the proof of the main result. The version of the comparison principle that we use in the main proofs is then stated and proved in \S\ref{sec:comp}. In \S\ref{sec:counter2}, we give the construction of the counterexamples of Theorem \ref{thm:counterexample}. Finally, in Appendix \ref{sec:counter1}, we provide the construction of solutions to the Neumann problem for the Laplacian which are H\"older continuous but not $C^1$ if we drop the assumption of axi-symmetry. 

\section{Weighted H\"older spaces}\label{sec:holder}
Although we will ultimately end up with a $C^{1,\al}$ estimate on the solutions $u$ of oblique derivative problems, the estimates and proofs are most conveniently stated in certain weighted H\"older spaces.

Due to the lack of regularity of the domain at the vertex of the cone, we incorporate the distance to the vertex in defining our H\"older spaces. For $k\in\N\cup\{0\}$, $\al\in(0,1]$, we define the $\sup$ norm $\|u\|_0$, standard H\"older semi-norm $[u]_\al$ and norm $\|u\|_{k,\al}$ of a function $u:\overline{\Om}\to\R$ to be
\beq
\|u\|_{0,\Om}=\sup_{{\Om}} |u|,\quad [u]_{\al,\Om}=\sup_{\substack{x_1,x_2\in{\Om},\\x_1\neq x_2}}\frac{|u(x_1)-u(x_2)|}{|x_1-x_2|^\al},\quad \|u\|_{k,\al,\Om}=\sum_{j=0}^{k}\|D^ju\|_{0,\Om}+[D^ku]_{\al,\Om},
\eeq
where $D^ju$ is the tensor of all $j$-th derivatives of $u$.

For $x,x_1,x_2\in\Om$, we define the distances $d_{x}=|x|$ and $d_{x_1,x_2}=\min\{d_{x_1},d_{x_2}\}$. For a function $u:\Om\to\R$, $k\in\N\cup\{0\}$, $\al\in(0,1]$ and $\be\in\R$, we define weighted norms 
\beqa
&\|u\|_{k,0,\Om}^{(\be)}=\sum_{j=0}^k \sup_{x\in{{\Om}}}\big(d_x^{\max\{j+\be,0\}}|D^j u(x)|\big),\\
&[u]_{k,\al,\Om}^{(\be)}=\sup_{\substack{x_1,x_2\in{{\Om}}\\x_1\neq x_2}}\Big(d_{x_1,x_2}^{\max\{k+\al+\be,0\}}\frac{|D^ku(x_1)-D^ku(x_2)|}{|x_1-x_2|^\al}\Big),\\
&\|u\|_{k,\al,\Om}^{(\be)}=\|u\|_{k,0,\Om}^{(\be)}+[u]_{k,\al,\Om}^{(\be)}.
\eeqa
We denote by $C_{k,\al}^{(\be)}(\Om)$ the space of functions whose norm $\|u\|_{k,\al,\Om}^{(\be)}$ is finite. When no confusion can arise, we usually drop the subscript $\Om$ from the definition of the norm, writing instead $\|u\|_{k,\al}^{(\be)}$ etc. 
Note in particular that if $u\in C_{2,0,\Om}^{(-1-\al)}$, then $u\in C^{1,\al}(\overline{\Om})$. The norms $\|\cdot\|_{k,\al,\om}^{(\be)}$ and $\|\cdot\|_{k,\al,\gamma}^{(\be)}$ for $\gamma\subset\d\om$ are defined similarly (with weights to the vertex).

We will need the following lemmas concerning these weighted norms. Proofs may be found in \cite[Chapter 2]{L}.
\begin{lemma}\label{lemma:holder}
(i) Suppose $\al\in(0,1]$, $\be\geq-\al$ and $\be_1,\be_2,\be_1',\be_2'\in\R$ such that $\be=\be_1+\be_2=\be_1'+\be_2'$, $\be_1,\be_2'\geq-\al$, $\be_2,\be_1'\geq0$. 
 Then for any $u\in C_{0,\al}^{(\be_1)}$, $v\in C_{0,\al}^{(\be_2')}$, we have
\beq\label{ineq:holderprod2}
[uv]_{0,\al}^{(\be)}\leq[u]_{0,\al}^{(\be_1)}\|v\|_{0}^{(\be_2)}+\|u\|_{0}^{(\be_1')}[v]_{0,\al}^{(\be_2')}.
\eeq
(ii) Suppose that $k_1,k_2\in\N\cup\{0\}$, $\al_1,\al_2\in(0,1]$, $\be_1,\be_2\in\R$ satisfy 
$$k_j+\al_j+\be_j\geq 0,\:\max\{k_j+\al_j+\be_j\}>0,\quad \text{ for }j=1,2.$$
Let $\th\in(0,1)$. Then, for $k\in\N\cup\{0\}$, $\al\in(0,1]$, $\be\in\R$ defined by
$$k+\al=\th(k_1+\al_1)+(1-\th)(k_2+\al_2),\quad \be=\th\be_1+(1-\th)\be_2,$$
there exists a constant $C>0$ such that, for any $u\in C_{k,\al}^{(\be)}$, 
\beq\label{ineq:holderinterp}
\|u\|_{k,\al}^{(\be)}\leq C\big(\|u\|_{k_1,\al_1}^{(\be_1)}\big)^\th\big(\|u\|_{k_2,\al_2}^{(\be_2)}\big)^{1-\th}.
\eeq
\end{lemma}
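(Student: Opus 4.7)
The plan is to prove both parts by reducing to the standard (unweighted) estimates on the right scale and then carefully bookkeeping the weights; both inequalities are essentially algebraic consequences of the definitions once the right splittings are in place.

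For part (i), I would fix $x_1,x_2\in\Om$ and assume without loss of generality that $d_{x_1}\leq d_{x_2}$, so that $d_{x_1,x_2}=d_{x_1}$. Applying the elementary identity $u(x_1)v(x_1)-u(x_2)v(x_2)=(u(x_1)-u(x_2))v(x_1)+u(x_2)(v(x_1)-v(x_2))$, dividing by $|x_1-x_2|^\al$ and multiplying by $d_{x_1,x_2}^{\max\{\al+\be,0\}}$, I would distribute the weight as
$$d_{x_1,x_2}^{\max\{\al+\be,0\}}=d_{x_1,x_2}^{\max\{\al+\be_1,0\}}\cdot d_{x_1,x_2}^{\max\{\be_2,0\}}\cdot d_{x_1,x_2}^{\sigma_1}$$
for the first term, and analogously with $(\be_1',\be_2')$ for the second, where the exponent $\sigma_1$ collects the leftover power. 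The decomposition hypotheses $\be_1+\be_2=\be_1'+\be_2'=\be$ and the sign constraints $\be_1,\be_2'\geq-\al$ and $\be_2,\be_1'\geq0$ guarantee, after a case split on the signs of $\al+\be$, $\al+\be_1$, etc., that $\sigma_1,\sigma_1'\leq0$; combined with $d_{x_1,x_2}\leq d_{x_1}\leq d_{x_2}$, each factor is then bounded by the corresponding weighted norm of $u$ or $v$. Summing yields \eqref{ineq:holderprod2}.

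For part (ii), I would use a localization/rescaling argument. For $y\in\om$ with $d_y=\rho$, let $B=B_{\rho/2}(y)$ (which lies in $\Om$ if $y$ is an interior point, or else is intersected with the nearby smooth boundary portion, in which case one uses a standard chart; since the weights are with respect to the cone vertex only, the local geometry is uniform at scale $\rho$). Setting $\tilde u(z)=u(y+\tfrac{\rho}{2}z)$, the classical unweighted interpolation inequality gives
$$\|\tilde u\|_{k,\al;B_1}\leq C\|\tilde u\|_{k_1,\al_1;B_1}^{\th}\|\tilde u\|_{k_2,\al_2;B_1}^{1-\th},$$
with the interpolation exponent determined by $k+\al=\th(k_1+\al_1)+(1-\th)(k_2+\al_2)$. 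Rescaling back, each seminorm $[D^{k_i}u]_{\al_i;B}$ picks up a factor of $\rho^{-(k_i+\al_i)}$; multiplying through by $\rho^{\max\{k+\al+\be,0\}}$ and using $\be=\th\be_1+(1-\th)\be_2$ together with Young's inequality converts the pointwise estimate on $B$ into a bound involving exactly the weighted norms $\|u\|_{k_i,\al_i}^{(\be_i)}$. Taking the supremum over $y\in\om$ yields \eqref{ineq:holderinterp}.

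The main technical obstacle is the weight bookkeeping at the transition $\al+\be=0$ (and similarly $k+\al+\be=0$) in part (i): one must separately treat the cases in which the relevant exponents are positive, zero, or negative to verify that the cumulative weight never exceeds what is afforded by the norm of one of the factors. Once the sign hypotheses are used systematically, this is routine. For part (ii), the only subtlety is ensuring the rescaling is uniform in $y$; this is handled by working with balls of radius proportional to $d_y$, which is legitimate because the weighted norms only weight distance to the vertex, and the geometry at every other boundary portion is Lipschitz at a fixed scale.
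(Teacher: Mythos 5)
The paper does not actually prove Lemma \ref{lemma:holder}: it defers to Lieberman's monograph (\cite[Chapter~2]{L}), so there is no in-house argument to compare against. Your sketch is essentially the standard argument found in that reference (and in Gilbarg--H\"ormander \cite{GH}), and the overall plan is sound, but let me flag a couple of points where the weight bookkeeping in part~(i) is stated a little too loosely.

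In part~(i), under the stated hypotheses there is in fact no case split and no leftover exponent: since $\be_1\geq-\al$, $\be_2\geq 0$, $\be\geq-\al$, the three maxima collapse to $\al+\be_1$, $\be_2$, $\al+\be$, so $\sigma_1=(\al+\be)-(\al+\be_1)-\be_2=0$ identically (and likewise $\sigma_1'=0$). This is better than ``$\sigma_1\leq 0$'': a strict inequality $\sigma_1<0$ would actually be the \emph{wrong} sign, since $d_{x_1,x_2}$ may be less than $1$ and then $d^{\sigma_1}>1$; a strict inequality $\sigma_1>0$ would be wrong if $d>1$. Only $\sigma_1=0$ works unconditionally, and it is exactly what the sign hypotheses deliver. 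The real content of the sign restrictions $\be_2\geq0$, $\be_1'\geq 0$ is elsewhere: having ordered $d_{x_1}\leq d_{x_2}$, the second term of the Leibniz split involves $|u(x_2)|$ but the weight available is only $d_{x_1,x_2}=d_{x_1}$, and one needs $d_{x_1}^{\be_1'}\leq d_{x_2}^{\be_1'}$ to pass from $d_{x_1}^{\be_1'}|u(x_2)|$ to $\|u\|_0^{(\be_1')}$; this monotonicity is precisely where $\be_1'\geq0$ enters. With that clarification, part~(i) is correct. Your part~(ii)---cover $\om$ by balls $B_{d_y/2}(y)$, rescale to unit scale, apply the classical unweighted interpolation inequality, rescale back and use $\be=\th\be_1+(1-\th)\be_2$ so the powers of $d_y$ balance---is exactly the standard proof, and is fine as sketched; the one thing worth saying explicitly is that the intermediate-order sup-norms $\|D^j u\|_0^{(\be)}$ for $0\le j\le k$ also need the same treatment, which the rescaling handles uniformly.
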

Finally, in the proofs of Lemmas \ref{lemma:V} and Lemma \ref{lemma:W} below, we will also need the following norms, weighted by distance to a portion of the boundary. Let $\Sigma\subset\d\Omega$ be closed and define $d_x^\Sigma=\dist(x,\Sigma)$, $d_{x_1,x_2}^\Sigma=\min\{d_{x_1}^\Sigma,d_{x_2}^\Sigma\}$. We then define
\beqa\label{def:Sigmaholder}
&\|u\|_{k,0,\Om}^{(\be),\Sigma}=\sum_{j=0}^k \sup_{x\in\Om}\big((d_x^\Sigma)^{\max\{j+\be,0\}}|D^j u(x)|\big),\\
&[u]_{k,\al,\Om}^{(\be),\Sigma}=\sup_{\substack{x_1,x_2\in\Om\\x_1\neq x_2}}\Big((d_{x_1,x_2}^\Sigma)^{\max\{k+\al+\be,0\}}\frac{|D^ku(x_1)-D^ku(x_2)|}{|x_1-x_2|^\al}\Big),\\
&\|u\|_{k,\al,\Om}^{(\be),\Sigma}=\|u\|_{k,0,\Om}^{(\be),\Sigma}+[u]_{k,\al,\Om}^{(\be),\Sigma}.
\eeqa

\section{Main estimates and proof of main theorem}\label{sec:mainproofs}
We break the proof of Theorem \ref{thm:rough} into two parts. The first concerns the case in which the oblique vector points into the first or third quadrant. In this case, we may apply the barrier technique in order to conclude that the desired H\"older regularity of the gradient holds. This is the content of Theorem \ref{thm:main} below. The second part is to prove a perturbative result for oblique vectors close to $\d_{x_n}$ ($\d_{y_1}$ in axi-symmetric coordinates). This is contained in Theorem \ref{thm:perturb} below.
\begin{thm}\label{thm:main}
Suppose $u\in C_{2,\al,\Om}^{(-1-\al)}$ is axi-symmetric and satisfies \eqref{eq:fullspace}. Let the coefficients of \eqref{eq:fullspace} satisfy 
\beq\label{ass:1}
A^{ij}\in C^0\cap C_{0,\al,\Om}^{(0)},\quad A^i,A^0\in C_{0,\al,\Om}^{(1-\al)},
\eeq
and suppose moreover that $\be$ is axi-symmetric, uniformly oblique and inward pointing on $\d\Om\setminus\{0\}$, $\be^0$ is axi-symmetric and, when considered in the $y$ coordinates,
\beq\label{ass:2}
\be,\be^0\in  C_{1,\al}^{(-\al)}(\gcone\cup\gball).\eeq
Let $\eta_1:[0,\infty)\to[0,\infty)$ be a continuous, increasing function such that $\eta_1(0)=0$ and 
\beq\label{ass:3}
|A^{ij}(x)-A^{ij}(\bar x)|\leq \eta_1(|x-\bar x|) \quad \text{ for any }x\in\Om,\,\,\bar x\in\d\Om.
\eeq
For compatibility at the intersection $\overline\Gcone\cap\overline\Gball$, we assume either
\beq\label{ass:4}
\Big|\frac{\be_{\textup{b}}}{|\be_{\textup{b}}|}\pm\frac{\be_{\textup{c}}}{|\be_{\textup{c}}|}\Big|\geq\tilde\epsilon>0\quad\text{ or } \quad \be_{\textup{b}}=\be_{\textup{c}} \text{ on }\overline\Gcone\cap\overline\Gball,
\eeq
where $\be_{\textup{b}}$ and $\be_{\textup{c}}$ are the limits of $\be$ on $\overline\Gcone\cap\overline\Gball$ from either side.\\
Finally, we assume that the data $f,g$ are axi-symmetric and
\beq\label{ass:5}
f\in C_{0,\al,\Om}^{(1-\al)},\quad g\in C_{1,\al}^{(-\al)}(\Gcone\cup\Gball).
\eeq
We write, in $y$ coordinates, 
\beq\label{eq:beta0}
\lim_{\substack{y\to 0\\ y\in\gcone}}\beta(y)=(\cos(s),\sin(s))\quad\text{ such that }s\in(-\pi+\th_0,\th_0).
\eeq Suppose $\cos(s)\sin(s)>0$. Then there exists $\al_1=\al_1(\th_0,\tilde\epsilon,s)\in(0,1)$ such that if $\al\in(0,\al_1)$ then
$$\|u\|_{2,\al}^{(-1-\al)}\leq C\big(\|f\|_{0,\al}^{(1-\al)}+\|g\|_{1,\al}^{(-\al)}+\|u\|_0\big).$$
The constant $C$ depends on the norms $\|A^{ij}\|_{0,\al}^{(0)}$, $\|A^i\|_{0,\al}^{(1-\al)}$, $\|A^0\|_{0,\al}^{(1-\al)}$, $\|\be\|_{1,\al,\gcone}^{(-\al)}$, $\|\be^0\|_{1,\al,\gcone}^{(-\al)}$, as well as $\Lambda$, $\la$, $\eta_1$, $\th_0$, $\tilde\epsilon$, $R$, $s$ and $\al$.
\end{thm}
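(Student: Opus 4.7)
The plan is to follow the strategy outlined after Theorem \ref{thm:counterexample}: reduce to a purely local estimate near the vertex, then run a barrier argument on suitably chosen first-order combinations of $u$. Away from the vertex, the intersection $\overline{\Gcone}\cap\overline{\Gball}$ is a smooth codimension-two submanifold along which $\be$ satisfies the corner compatibility condition \eqref{ass:4}, so the results of \cite{L88,L} yield $C^{1,\al}$ bounds up to this corner, and standard interior and flat-boundary Schauder estimates handle the rest of $\Om$ away from $0$. The real content of the theorem is therefore to establish, for some $\al$ below a threshold $\al_1$, a vertex growth bound of the form $|Du(y)|\leq C|y|^\al$ together with the matching oscillation bound on dyadic shells, with $C$ controlled by $\|f\|_{0,\al}^{(1-\al)}+\|g\|_{1,\al}^{(-\al)}+\|u\|_0$; a standard scaling and covering argument then promotes this to the claimed weighted estimate.

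Working on the axi-symmetric reduction on $\om$, I freeze the principal coefficients at the origin using \eqref{eq:L0bar} to obtain the rotationally invariant constant-coefficient operator $\overline L_0$, and subtract two auxiliary solutions in $\Om[\rho]$ for $\rho$ small. The first, $w_1$, solves $\overline L_0 w_1=f-(A^{ij}(x)-A^{ij}(0))\d_{ij}u-A^i\d_iu-A^0u$ with homogeneous oblique and symmetry conditions; the second, $w_2$, solves $\overline L_0 w_2=0$ with the inhomogeneous oblique data $g$ on $\gcone\cup\gball$. Weighted Schauder-type bounds together with assumptions \eqref{ass:1}--\eqref{ass:5} and the interpolation inequality \eqref{ineq:holderinterp} of Lemma \ref{lemma:holder} will give $C_{2,\al}^{(-1-\al)}$ control of $w_1,w_2$, with the interpolation essential for absorbing the $\d_{ij}u$ and $\d_iu$ contributions to the $w_1$ source into the left-hand side at the end. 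The difference $v:=u-w_1-w_2$ then satisfies $\overline L_0 v=0$ in $\Om[\rho]$ with a homogeneous oblique condition on $\gcone$ and $v_{y_2}=0$ on $\gsymm$, so the problem reduces to proving $|Dv(y)|\leq C|y|^\al$.

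For the barrier step I differentiate the homogeneous problem for $v$ in a carefully chosen direction $\tau$ tied to the limiting oblique vector $(\cos s,\sin s)$, so that the scalar quantity $V=\tau\cdot Dv$ satisfies an elliptic equation of the form $\overline L_0 V=0$ in $\om$ together with an oblique condition on $\gcone$ and a symmetry condition on $\gsymm$. The sign hypothesis $\cos(s)\sin(s)>0$ is used crucially here: for $s\in(0,\tfrac\pi2)$ the vector $\be$ at the vertex points into the first quadrant, so the combination $V=\cos(s)\d_{y_1}v+\sin(s)\d_{y_2}v$ yields a boundary operator whose zero-order coefficient has the sign compatible with the comparison principle of \S\ref{sec:comp}; the case $s\in(-\pi,-\tfrac\pi2)$ is handled symmetrically by using the opposite derivative combination. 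Against $V$ I then place the barriers of \S\ref{sec:barrier}, which will be of the form $r^\al\Phi(\th)$ with $\Phi$ chosen so that $\overline L_0(r^\al\Phi)\leq 0$ in $\om$, the inward oblique derivative is nonnegative on $\gcone$, and $\d_{y_2}(r^\al\Phi)=0$ on $\gsymm$. Comparing $V$ to a suitable multiple of such a barrier, with the multiple controlled by the boundary values on $\d\Om[\rho]\setminus\{0\}$ (which in turn are controlled by the auxiliary estimates and by $\|u\|_0$), yields $|V(y)|\leq C|y|^\al$; applying the same procedure to a second, transverse derivative combination completes the gradient bound.

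The main obstacle I expect is the interplay between the singular coefficient $b^2=b^{2,1}/y_2+b^{2,2}$, with $(n-2)\la/y_2\leq b^{2,1}/y_2\leq (n-2)\La/y_2$, and the symmetry condition on $\gsymm$. Operators of this type fall outside the standard Schauder framework, so each Schauder estimate and comparison argument must either be run on the axi-symmetric lifts back to the three-dimensional domain $\Om$ (where the singular coefficient disappears) or verified directly in two dimensions with the Neumann-type profile $\Phi'(0)=0$ forced by $\gsymm$ built in. This Neumann condition at $\th=0$ together with the oblique condition at $\th=\th_0$ produces an angular eigenvalue problem whose largest admissible exponent is exactly the threshold $\al_1(\th_0,\tilde\epsilon,s)$ appearing in the statement; showing $\al_1>0$ under $\cos(s)\sin(s)>0$ and producing an explicit admissible $\Phi$ with the required sign properties is the technical heart of the argument and the point at which the geometry of $s$ versus $\th_0$ enters decisively.
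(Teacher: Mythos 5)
Your high-level outline matches the paper: localize near the vertex (using \cite{L88} for the rest of $\d\Om$), freeze the principal coefficients via \eqref{eq:L0bar}, subtract two auxiliary solutions to kill the source and the inhomogeneous oblique data, run a barrier argument on derivative combinations of the reduced solution, and finish by scaling and interpolation. However, there is a genuine gap in the step you yourself identify as ``the technical heart of the argument,'' namely the choice of derivative combination to compare against the barrier.

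You propose to barrier the quantity $V=\be_0\cdot Dv=\cos(s)\,\d_{y_1}v+\sin(s)\,\d_{y_2}v$, i.e., the derivative in the oblique direction itself. This choice fails on both boundary portions. On $\gcone$, since $v$ was constructed to satisfy the \emph{homogeneous} oblique condition $\be_0\cdot Dv=0$, your $V$ vanishes identically there; you get a trivial Dirichlet condition, not ``a boundary operator whose zero-order coefficient has the sign compatible with the comparison principle,'' so the stated role of $\cos(s)\sin(s)>0$ does not materialize in your set-up. On $\gsymm$, the direction $\d_{y_2}$ does \emph{not} commute with the singular term $\tfrac{b_0^{2,1}}{y_2}\d_{y_2}$ in $L_0$ nor with the symmetry condition: by oddness of $v_{y_2}$ in $y_2$, the trace of $V$ on $\gsymm$ is $\cos(s)\,v_{y_1}$, and $\d_{y_2}V|_{\gsymm}=\sin(s)\,v_{y_2y_2}$ is neither zero nor controlled, so $V$ satisfies neither a homogeneous Neumann condition nor a Dirichlet condition with known data there. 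Thus the comparison principle of \S\ref{sec:comp} cannot be invoked directly for $V$, and without a prior bound on $v_{y_1}$ the commutator source $-\cos(s)\tfrac{b_0^{2,1}}{y_2^2}v_{y_1}$ (which arises when you rewrite $L_0V-\tfrac{b_0^{2,1}}{y_2^2}V$) is also out of control.

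What the paper actually does is different in a way that is essential, not cosmetic. The first derivative combination is $v_1=(u-V)_{y_1}$, the \emph{axial} direction, precisely because $\d_{y_1}$ commutes with $L_0$ (including the singular term) and with the Neumann condition on $\gsymm$; the oblique condition $M_1v_1$ on $\gcone$ is then derived from the PDE in $z$-coordinates, and its zero-order coefficient $-\tfrac{1}{\epsilon}\tfrac{\nu_1}{\tilde a^{11}}\tfrac{\beta_1}{\beta_2}\tfrac{b_0^{2,1}}{y_2}$ has the favorable sign exactly when $\beta_1/\beta_2=\cos(s)/\sin(s)>0$, i.e.\ $\cos(s)\sin(s)>0$. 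Only after $|u_{y_1}|\lesssim|y|^\al$ is in hand does the paper pass to a second combination $w=v_1+\eps v_2$ for small $\eps>0$; this has a Dirichlet condition on $\gsymm$ (with data now controlled by the $v_1$ estimate), an oblique condition $M_2$ on $\gcone$ whose zero-order sign again uses $\cos(s)\sin(s)>0$, and an interior PDE whose zero-order term $-\tfrac{b_0^{2,1}}{y_2^2}(\cdot)$ has good sign and whose singular source is controlled by the $v_1$ estimate. The two-step ordering, and the choice of $\d_{y_1}$ rather than $\be_0\cdot D$ in the first step, are not interchangeable. You will also need Lemma \ref{lemma:Miller}, which establishes the barrier inequality for the \emph{specific} operators $M_1,M_2$ (with their tangential and zero-order pieces), not merely nonnegativity of $\be_0\cdot D(r^\al\Phi)$.
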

\begin{rmk}
The constant $\al_1$ is defined in Lemma \ref{lemma:Miller}. As discussed in \S\ref{sec:holder}, this gives the estimate on the $C^{1,\al}$ norm of $u$. We remind the reader that this class of vector fields $\be$ includes oblique vector fields that are discontinuous at the vertex of the cone such as the unit normal vector field as, once we reduce to the axi-symmetric coordinates, this becomes continuous (indeed, constant) on $\gcone$. We note in passing that the assumptions of this theorem could be weakened in various directions. Firstly, the choice of boundary conditions and data on $\gball$ is unimportant for the regularity at the cone vertex, which is what we are interested in here. These boundary conditions could of course be replaced with other suitable conditions. 
\end{rmk}

We begin with two auxiliary lemmas. In order to state these lemmas, we must first define the notation we use for our frozen coefficients on the domain $\om$. Let
\beq\label{def:frozen}
L_0:=a^{ij}_0\d_{ij}+\frac{b^{2,1}_0}{y_2}\d_2,\quad \be_0=(\be_1,\be_2)=\lim_{\substack{y\to 0\\ y\in\gcone}}\beta(y),
\eeq
where $a^{ij}_0=a^{ij}(0)$ and $b^{2,1}_0=b^{2,1}(0)$. Note that $L_0$ corresponds to the operator $\overline{L}_0=A^{ij}(0)\d_{ij}$ on $\Om$ as in \eqref{eq:L0bar}.

In the following lemma, we recall the notation $\om[\rho]=\om\cap B_\rho(0)$ and define $\gsymm[\rho],\gcone[\rho]$ similarly for $\rho\in(0,R]$.
\begin{lemma}\label{lemma:V}
Suppose $g_0\in C_{1,\al}^{(-\al)}(\gcone[2\rho])$ for some $0<\rho\leq\min\{1,R/2\}$. Then there exists $V\in C_{2,\al,\om[\rho]}^{(-1-\al)}$ satisfying
\beq
\begin{cases}
a^{ij}_0\d_{ij}V=0 &\text{ in }\om[\rho],\\
\be_0\cdot DV=g_0 &\text{ on }\gcone[\rho],\\
V_{y_2}=0\ &\text{ on }\gsymm[\rho].
\end{cases}
\eeq
Moreover, for any $\de\in(0,\al]$, $V$ satisfies the estimate
\beq
\|V\|_{2,\de,\om[\rho]}^{(-1-\al)}\leq C\|g_0\|_{1,\de,\gcone[2\rho]}^{(-\al)}.
\eeq
\end{lemma}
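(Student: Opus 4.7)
The plan is to exploit the vertex invariance \eqref{eq:L0bar} and the axi-symmetric Neumann condition on $\gsymm$ to reduce the construction of $V$ to a classical planar oblique derivative problem on a wedge, then extract the weighted estimate by dyadic scaling.

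\textbf{Reduction to Laplace on a planar wedge.} Assumption \eqref{eq:L0bar} forces the quadratic form $A^{ij}(0)\xi_i\xi_j$ to be axi-symmetric, which dictates $A^{ij}(0)=\mu\delta_{ij}$ for $1\leq i,j\leq n-1$, $A^{in}(0)=0$ for $i<n$, and $A^{nn}(0)=\nu$. Substituting into the reduction formulas from the introduction yields $a^{11}_0=\nu$, $a^{22}_0=\mu$ and $a^{12}_0=0$. The linear coordinate change $z_1=y_1$, $z_2=\sqrt{\nu/\mu}\,y_2$ then converts $a^{ij}_0\partial_{ij}$ into the Laplacian $\Delta_z$ and maps the axi-symmetric half-wedge into a rescaled wedge of opening $\tilde\theta_0\in(0,\pi)$; $\beta_0$ becomes another constant, inward-pointing oblique vector $\tilde\beta_0$. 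Since this is a linear bi-Lipschitz map, all the weighted H\"older spaces are preserved up to fixed multiplicative constants.

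\textbf{Planar problem by reflection.} The Neumann condition $V_{y_2}=0$ on $\gsymm$ allows even reflection of $V$ and of $g_0$ across $\{z_2=0\}$: the extended function is then required to be harmonic on the full planar wedge $\{-\tilde\theta_0<\arg z<\tilde\theta_0\}$ with oblique derivative conditions on the two walls, the constant oblique vectors being mirror images of $\tilde\beta_0$ and the data being even about $\{z_2=0\}$. Cutting off $g_0$ by a smooth function supported in $\gcone[2\rho]$ (which costs only a constant factor in the weighted norm) and applying the standard theory for symmetric oblique problems on planar wedges (see \cite[Ch.~4]{CF} and \cite[Sec.~4.5]{L}) produces a bounded solution together with the basic $C^{1,\alpha}$-up-to-the-vertex estimate for $\alpha$ small depending on $\tilde\theta_0$ and $\tilde\beta_0$.

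\textbf{Weighted estimate by scaling.} The constant-coefficient problem is invariant under the rescaling $V(z)\mapsto\lambda^{-(1+\alpha)}V(\lambda z)$, $g_0(z)\mapsto\lambda^{-\alpha}g_0(\lambda z)$, and the weighted norms appearing in the lemma are homogeneous of degree zero under this rescaling. Applying classical interior and boundary Schauder estimates for the constant-coefficient oblique problem on a fixed reference annulus, rescaling to each dyadic annulus $\{|z|\sim 2^{-k}\}\cap\om[\rho]$ and summing, converts the basic $C^{1,\alpha}$ estimate into the required bound on $\|V\|_{2,\delta,\om[\rho]}^{(-1-\alpha)}$. The main technical obstacle is ensuring that the lower-order $\|V\|_0$ contributions in the Schauder estimates scale compatibly with the $(-1-\alpha)$ weight; this is done by first subtracting the affine part of $V$ at the vertex (a constant plus a multiple of $z_1$ made compatible with the Neumann condition and the frozen oblique condition), so that the remainder is of size $O(|z|^{1+\alpha})$ near $0$ and the dyadic sum closes.
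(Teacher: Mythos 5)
Your proposal is essentially correct, but it takes a genuinely different route from the paper. Where the paper applies \cite[Theorem~1.4]{L88} directly to the oblique problem on the Lipschitz domain $\om$ (after extending $g_0$ and $\be_0$ to $\gcone\cup\gball$ and introducing a zero-order coefficient $\bar\be^0\leq 0$, $\bar\be^0\not\equiv 0$ to force uniqueness and hence a constant-free estimate), you instead reduce explicitly to a planar Laplace problem: using \eqref{eq:L0bar} to deduce $a^{12}_0=0$ and rescaling $y_2$, then reflecting evenly across $\gsymm$ to invoke the symmetric two-dimensional wedge theory of \cite{CF} and \cite{L}, and finally obtaining the weighted estimate by a dyadic rescaling argument. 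The observation that \eqref{eq:L0bar} forces $a^{12}_0=0$ is correct and does cleanly convert the Neumann wall into a reflection axis; the paper's proof of Lemma~\ref{lemma:V}, by contrast, does not need \eqref{eq:L0bar} at all, since Lieberman's theorem handles the mixed oblique/Neumann corner on a Lipschitz domain directly. What your approach buys is a more self-contained reduction to explicit planar Laplace theory; what it costs is the extra hypothesis \eqref{eq:L0bar} (harmless in context, since it is assumed in Theorem~\ref{thm:main}) and a longer argument.

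The one point you should be more careful about is where the constant-free estimate comes from. The planar wedge results you cite typically give a priori bounds of the form $\|V\|_{1,\al}\leq C(\|g_0\|+\|V\|_0)$, and your dyadic scaling argument needs the stronger constant-free version $\|V\|_{1,\al}\leq C\|g_0\|$ as input to bound the affine part and the size of $V-V_{\textrm{affine}}$ on each annulus. Without a mechanism that forces uniqueness (so that the $\|V\|_0$ term can be absorbed by a compactness/uniqueness argument, or avoided altogether), the dyadic sum is circular. The paper's proof addresses exactly this by inserting $\bar\be^0\leq0$, $\bar\be^0\not\equiv 0$ into the extended boundary condition; your construction (cutting off $g_0$ and solving on a bounded reflected wedge) should be supplemented analogously, for example by imposing a Dirichlet condition on the outer arc or a strictly negative zero-order term away from the vertex, together with the resulting maximum-principle uniqueness. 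With that modification, the rest of the argument goes through.
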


\begin{lemma}\label{lemma:W}
Suppose $f_1\in C_{0,\al,\om}^{(1-\al)}$. There exists $W\in C_{2,\al,\om}^{(-1-\al)}$ such that 
\beq
\begin{cases}
L_0W=f_1 & \text{ in }\om,\\
W_{y_2}=0 & \text{ on }\gsymm.
\end{cases}
\eeq
Moreover, $W$ satisfies $DW(0)=0$ and, for any $\de\in(0,\al]$, the estimates 
$$|DW(y)|\leq C\|f_1\|_{0,\de}^{(1-\al)}|y|^\al,\quad |D^2W(y)|\leq C\|f_1\|_{0,\de}^{(1-\al)}|y|^{\al-1}.$$
\end{lemma}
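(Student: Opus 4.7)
The plan is to lift $f_1$ to an axi-symmetric function on $\Omega\subset\R^n$, solve a constant-coefficient problem for $\overline{L}_0$ in the ambient $\R^n$, and then descend via axi-symmetry.  I would set $F_1(x):=f_1(x_n,|x'|)$ on $\Omega$.  By \eqref{eq:L0bar}, $\overline{L}_0$ commutes with rotations about the $x_n$-axis, so when restricted to axi-symmetric functions it reduces, in the $(y_1,y_2)$ coordinates, to exactly $L_0$.  Thus any axi-symmetric $\widetilde W$ with $\overline{L}_0\widetilde W=F_1$ will descend to $W(y_1,y_2):=\widetilde W(y_2 e_1+y_1 e_n)$ on $\omega$ satisfying $L_0W=f_1$.

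The natural construction uses the fundamental solution $\Gamma$ of $\overline{L}_0$ on $\R^n$.  I would pick a smooth cutoff $\eta$ supported in $B_{2R}(0)$ with $\eta\equiv 1$ on $\Omega$ and define
\[
\widetilde W(x):=\int_{\R^n}\Gamma(x-z)\,\eta(z)F_1(z)\,dz.
\]
This gives $\overline{L}_0\widetilde W=F_1$ in $\Omega$, and the rotational invariance of $\Gamma$ inherited from \eqref{eq:L0bar}, together with the axi-symmetry of $\eta F_1$, forces $\widetilde W$ to be axi-symmetric by a change of variables in the convolution.  The boundary condition $W_{y_2}=0$ on $\gsymm$ is then automatic: an axi-symmetric $C^1$ function must have gradient parallel to the axis at each point on it.

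For the pointwise estimates, since $L_0(1)=0$ and $L_0(y_1)=0$, I would subtract a constant and a multiple of $y_1$ so that $W(0)=0$ and $\d_{y_1}W(0)=0$; the remaining component $\d_{y_2}W(0)=0$ is forced by axi-symmetry.  Fix $y_0\in\omega$, set $\rho=|y_0|$, and let $x_0$ be the corresponding point in $\Omega$.  Splitting the convolution defining $\widetilde W$ into $\{|z|<2\rho\}$ and $\{|z|\geq 2\rho\}$, and using the bounds $|F_1(z)|\leq C|z|^{\alpha-1}\|f_1\|_{0,\delta}^{(1-\alpha)}$ and $|\Gamma(w)|\leq C|w|^{2-n}$, the normalization above should give the sharp bound $|\widetilde W(x)|\leq C|x|^{1+\alpha}$ in a neighbourhood of the origin.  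Combined with the weighted control $\|F_1\|_{C^{0,\delta}(B_{\rho/2}(x_0))}\leq C\rho^{\alpha-1-\delta}\|f_1\|_{0,\delta}^{(1-\alpha)}$ and a rescaled interior Schauder estimate for $\overline{L}_0$ on $B_{\rho/4}(x_0)$, this will yield $|D^2\widetilde W(x_0)|\leq C\rho^{\alpha-1}$ and hence $|D^2W(y_0)|\leq C|y_0|^{\alpha-1}$.  The bound $|DW(y)|\leq C|y|^{\alpha}$ then follows from $DW(0)=0$ by the fundamental theorem of calculus.

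The main obstacle I anticipate is establishing the sharp near-field bound $|\widetilde W(x)|\leq C|x|^{1+\alpha}$, which is precisely what allows the $L^\infty$ term to be absorbed in the scaled Schauder estimate.  Without subtracting the affine adjustment (which lies in the kernel of $\overline{L}_0$), the potential representation only gives the crude uniform bound $|\widetilde W|\leq CR^{1+\alpha}$ and the scaling fails.  An alternative that would bypass this delicate computation is to invoke directly the weighted Schauder theory in the spaces $C_{k,\alpha}^{(\beta)}$ developed in \cite[Chapter 4]{L}, which passes from $F_1\in C_{0,\delta}^{(1-\alpha)}$ to $\widetilde W\in C_{2,\delta}^{(-1-\alpha)}$ once the affine part of $\widetilde W$ at the origin has been removed.
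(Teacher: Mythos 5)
The paper takes a different route: it solves an oblique boundary value problem on $\Omega$ with a continuous vector field $\tilde\be$ equal to $e_n$ near the vertex, so that existence, the vertex-weighted estimate $\|W\|_{2,\delta}^{(-1-\alpha)}\leq C\|f_1\|_{0,\delta}^{(1-\alpha)}$, and (by uniqueness and axi-symmetry of the data) axi-symmetry of $W$ all come directly from \cite[Corollary 3.3]{L87}; the Neumann condition on $\gsymm$ and $DW(0)=0$ then follow for free. Your plan --- the Newtonian potential with an affine normalization in the kernel of $L_0$ --- is genuinely different and more elementary in that it constructs $W$ explicitly rather than invoking an existence theorem; your handling of axi-symmetry, the normalization, and the fundamental-theorem-of-calculus step are all correct. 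But there is a real gap.

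The estimate $|D^2W(y)|\leq C\|f_1\|_{0,\delta}^{(1-\alpha)}|y|^{\alpha-1}$ must hold uniformly up to $\gcone$, and your convolution needs $\eta F_1$ on the support of $\eta$, which strictly contains $\overline\Omega$. If you zero-extend $F_1$ outside $\Omega$, the source jumps across $\Gcone$, so the potential's second derivatives lie in BMO but not in $L^\infty$ near $\Gcone$: the interior Schauder constant at $x_0$ scales like an inverse power of $\dist(x_0,\Gcone)$, which is much worse than $|x_0|^{\alpha-1}$ when $x_0$ is close to $\Gcone$. You must therefore first construct a genuine H\"older extension of $f_1$ across $\gcone$ that preserves the vertex-weighted norm --- this is doable for the Lipschitz domain $\om$, but it must be stated and used, and it is not in the proposal. (The paper's BVP formulation sidesteps this entirely: $W$ there satisfies an oblique condition on all of $\d\Om$, so Lieberman's boundary Schauder estimates cover points near $\Gcone$ automatically.) The near-field bound you flag as delicate can indeed be closed, but the far-field contribution on $\{|z|\geq 2|x|\}$ requires Taylor-expanding $\Gamma$ to second order and using $|D^2\Gamma(w)|\leq C|w|^{-n}$ together with $|h(z)|\leq C|z|^{\alpha-1}$ to produce $|x|^2\int_{2|x|}^{2R}r^{\alpha-2}\,dr\leq C|x|^{1+\alpha}$; the bound $|\Gamma(w)|\leq C|w|^{2-n}$ that you cite is insufficient there, so as written the step is incomplete.
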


Delaying the proofs of these lemmas temporarily, we now present the proof of Theorem \ref{thm:main}. The basic strategy of the proof is the following: we first use Lemmas \ref{lemma:V} and \ref{lemma:W} to reduce the problem to a constant coefficient problem with no source term. We then derive further problems solved by specific derivatives or derivative combinations of the solution and apply the barrier method (using the barriers of \S\ref{sec:barrier}) to get estimates of the form
$$|Du(y)|\leq C_0|y|^\al,$$
where $C_0$ depends on the data. Finally, we apply a standard scaling argument and interpolation to deduce from this the full H\"older regularity.

The precise construction of the barriers is delayed until \S\ref{sec:barrier}, as the notation we will require occurs most naturally in the proof of Theorem \ref{thm:main} below.

\begin{proof}[Proof of Theorem \ref{thm:main}]
We begin by noting that by the results of \cite{L88}, the desired estimates hold locally in  $\overline{\om}\setminus\{0\}$. Indeed, the conditions \eqref{ass:4} are precisely those required in \cite[Lemma 1.3]{L88}. It is therefore sufficient to show the estimate locally around the origin (the vertex of the cone). By a standard partition of unity argument, we may assume that $u$ is compactly supported near 0 on a ball $B_\rho(0)\cap\overline\om$, $0<\rho\leq\min\{1,R/2\}$. Moreover, without loss of generality, we may assume $u(0)=0$ (else consider $u-u(0)$) and that $Du(0)=0$ also (else consider $u-\frac{g(0)}{\be_1}y_1$). Thus also $g(0)$ may be assumed to be zero. Note that such adjustments to $u$ (subtracting an affine function) do not change the regularity of the data $f$ and $g$ due to the assumptions made on the coefficients $A^i,A^0$ and $\be,\be^0$.\\
\textbf{Step 1:} We begin by freezing coefficients. Defining 
$$f_0=f-(a^{ij}-a^{ij}_0)\d_{ij}u-b^1\d_1u-(b^2-\frac{b^{2,1}_0}{y_2})\d_2u-cu,$$
we note that on $\om[\rho]$, for any $\de\in(0,\al]$,
\beq\label{ineq:F_0}
\|f_0\|_{0,\de}^{(1-\al)}\leq \|f\|_{0,\de}^{(1-\al)}+\eta_1(\rho)\|u\|_{2,\de}^{(-1-\al)}+C[A^{ij}]_{0,\de}^{(0)}\|u\|_{2,0}^{(-1-\al)}+C(\|A^i\|_{0,\de}^{(1-\al)}+\|A^0\|_{0,\de}^{(1-\al)})\|u\|_{1,\de}^{(-1)},
\eeq
where $\eta_1(\rho)$ is the continuous, increasing function such that $\eta_1(0)=0$ from the statement of the theorem and we have used \eqref{ineq:holderprod2}.
Defining also $$g_0:=g-(\be-\be_0)\cdot Du -\be^0 u,$$ we have that
$$\be_0\cdot Du=g_0\text{ on }\gcone,$$
where, for $\de\in(0,\al]$ to be chosen later, 
\beq\label{ineq:G_0}
G_0:=\|g_0\|_{1,\de,\gcone}^{(-\al)}\leq \|g\|_{1,\de,\gcone}^{(-\al)}+\eta_2(\rho)\|u\|_{2,\de}^{(-1-\al)}+C[\be,\be^0]_{0,\de}^{(0)}\|u\|_{2,0}^{(-1-\alpha)}+C\|u\|_{1,\de}^{(-1)},
\eeq
where $\eta_2(\rho)$ is continuous, increasing, $\eta_2(0)=0$ (such an $\eta_2$ exists by \eqref{eq:beta0}) and we have used Lemma \ref{lemma:holder}(i).
Taking now the function $V$ defined by Lemma \ref{lemma:V} with boundary data $g_0$, we obtain 
\beqs
\begin{cases}
L_0(u-V)=f_0-\frac{b^{2,1}_0}{y_2}\d_2V=:f_1 & \text{ in }\om,\\
\be_0\cdot D(u-V)=0 & \text{ on }\gcone,\\
(u-V)_{y_2}=0 & \text{ on } \gsymm.
\end{cases}
\eeqs
Using the estimate of Lemma \ref{lemma:V} and \eqref{ineq:F_0}--\eqref{ineq:G_0}, we have (for $\de\in(0,\al)$ to be chosen later)
\beq\label{ineq:F_1}
F_1:=\|f_1\|_{0,\de}^{(1-\al)}\leq C\big(\|f\|_{0,\de}^{(1-\al)}+\|g\|_{1,\de,\gcone}^{(-\al)}+(\eta_1(\rho)+\eta_2(\rho))\|u\|_{2,\de}^{(-1-\al)}+\rho^{\al-\de}\|u\|_{2,0}^{(-1-\al)}+\|u\|_{1,\de}^{(-1)}\big),
\eeq
where we have used that $[A^{ij}]_{0,\de}^{(0)}+[\be,\be^0]_{0,\de}^{(0)}\leq C\rho^{\al-\de}$ on $B_\rho(0)$. Note also that $g_0(0)=0$, so as $DV$ is continuous up to the origin, we must have $DV(0)=0$.

Next we apply Lemma \ref{lemma:W} with source term $f_1$ to obtain a function $W$ satisfying
\beqs
\begin{cases}
L_0W=f_1 & \text{ in }\om,\\
W_{y_2}=0 & \text{ on }\gsymm,
\end{cases}
\eeqs
and the estimates
$$|DW(y)|\leq CF_1|y|^{\al},\quad |D^2W(y)|\leq CF_1|y|^{\al-1}.$$
\textbf{Step 2:} We now proceed to derive suitable problems for the derivatives of $u$ and apply the barrier method to obtain a growth rate estimate on $|Du|$ near the vertex.\\
We write $\nu=(\nu_1,\nu_2)$ for the inward unit normal on $\gcone$ and $\be_0=(\beta_1,\beta_2)$, where $\beta_1,\beta_2$ have the same sign (by the assumption made on $s$).
Define
\beq
v_1=(u-V)_{y_1},\quad v_2=(u-V)_{y_2}.
\eeq
Define coordinates $(z_1,z_2)$ such that $\d_{z_1}$ is parallel to $\beta_0=(\beta_1,\beta_2)$ and $\d_{z_2}$ is parallel to $\tau$, the tangent to $\gcone$, so
\beq\begin{pmatrix}\label{def:zcoords}
z_1\\z_2
\end{pmatrix}=\begin{pmatrix}
\beta_1 & \beta_2\\
\nu_2 & -\nu_1
\end{pmatrix}\begin{pmatrix}
y_1\\y_2
\end{pmatrix}.\eeq
The reverse coordinate change is given by
$$\begin{pmatrix}
y_1\\y_2
\end{pmatrix}=\frac{1}{\nu\cdot\beta_0}\begin{pmatrix}
\nu_1 & \beta_2\\
\nu_2 & -\beta_1
\end{pmatrix}\begin{pmatrix}
z_1\\z_2
\end{pmatrix}.$$
From here on, we write  $\nu\cdot\beta_0=\epsilon>0$ by obliqueness (recall $\be$, $\nu$ are both inward pointing).

 Now by changing coordinates in the operator $L_0$, we find coefficients $\tilde{a}^{ij}$ such that for any $\psi:\om\to\R$,
 \beq\label{def:tildecoords}
 \tilde{a}^{ij}\psi_{z_iz_j}=a^{ij}_0\psi_{y_iy_j}\quad\text{ and }\quad\frac{\la}{\epsilon^2}\leq \tilde{a}^{11},\tilde{a}^{22}\leq \frac{\La}{\epsilon^2}.
 \eeq
We derive an oblique derivative condition for $v_1$ by computing on $\gcone$. Noting that $(u-V)_{z_1z_2}=0$ on $\gcone$, we calculate on $\gcone$
\beqas
\d_{z_1}v_1=&\,\frac{1}{\epsilon}\d_{z_1}\big(\nu_1(u-V)_{z_1}+\beta_2(u-V)_{z_2}\big)=\frac{1}{\epsilon}\nu_1(u-V)_{z_1z_1}\\
=&\,\frac{1}{\epsilon}\frac{\nu_1}{\tilde{a}^{11}} f_1-\frac{1}{\epsilon}\nu_1\frac{\tilde{a}^{22}}{\tilde a^{11}}(u-V)_{z_2z_2}-\frac{1}{\epsilon}\frac{\nu_1}{\tilde a^{11}}\frac{b_0^{2,1}}{y_2}(u-V)_{y_2}\\
=&\,\frac{1}{\epsilon}\frac{\nu_1}{\tilde{a}^{11}} f_1-\frac{1}{\epsilon}\frac{\nu_1}{\beta_2}\frac{\tilde{a}^{22}}{\tilde a^{11}}\big(\nu_1(u-V)_{z_1z_2}+\beta_2(u-V)_{z_2z_2}\big)+\frac{1}{\epsilon}\frac{\nu_1}{\tilde a^{11}}\frac{\beta_1}{\beta_2}\frac{b_0^{2,1}}{y_2}(u-V)_{y_1}\\
=&\,\frac{1}{\epsilon}\frac{\nu_1}{\tilde{a}^{11}} f_1-\frac{\nu_1}{\beta_2}\frac{\tilde a^{22}}{\tilde a^{11}}\d_{z_2}v_1+\frac{1}{\epsilon}\frac{\nu_1}{\tilde a^{11}}\frac{\beta_1}{\beta_2}\frac{b_0^{2,1}}{y_2}v_1,
\eeqas
where we have also used that on $\gcone$ $$(u-V)_{y_2}=-\frac{\beta_1}{\beta_2}(u-V)_{y_1}\text{ as }\be_0\cdot D(u-V)=0.$$
Thus we obtain the boundary condition
\beq
 M_1v_1:=\be_0\cdot Dv_1+\frac{\nu_1}{\beta_2}\frac{\tilde a^{22}}{\tilde a^{11}}\tau\cdot Dv_1-\frac{1}{\epsilon}\frac{\nu_1}{\tilde a^{11}}\frac{\beta_1}{\beta_2}\frac{b_0^{2,1}}{y_2}v_1=\frac{1}{\epsilon}\frac{\nu_1}{\tilde{a}^{11}} f_1.
\eeq
Noting that $\nu_1,\beta_1/\beta_2,\tilde{a}^{11},b_0^{2,1},\epsilon>0$, the zero order term in this boundary operator comes equipped with a negative sign, which is necessary for the application of the comparison principle. 

As $\d_{y_1}$ commutes with $L_0$ and the Neumann condition on $\gsymm$, we arrive at the following problem for $v_1$,
\beq
\begin{cases}
L_0(v_1-W_{y_1})=0 &\text{ in }\om,\\
(v_1-W_{y_1})_{y_2}=0 &\text{ on }\gsymm,\\
 M_1(v_1-W_{y_1})= \frac{1}{\epsilon}\frac{\nu_1}{\tilde{a}^{11}} f_1-M_1W_{y_1} &\text{ on }\gcone.
\end{cases}
\eeq
Let $v_\al$ be the Miller barrier function as in \S\ref{sec:barrier} and choose $\al_1$ as in Lemma \ref{lemma:Miller} so that for $\al\in(0,\al_1)$, we have the boundary inequality
$$M_1v_\al\leq -c_1|y|^{\al-1}\text{ on }\gcone.$$
Therefore, using the estimate of Lemma \ref{lemma:W} for $W$, we may choose a constant $\widehat{C}=C_1\big(F_1+\|u\|_1\big)$, where $C_1>0$ is independent of $u$ and $F_1$, such that $\hat{v}=\widehat{C}v_\al$ satisfies
\beq
\begin{cases}
L_0(\hat{v}\pm(v_1-W_{y_1}))\leq 0 &\text{ in }\om,\\
(\hat{v}\pm(v_1-W_{y_1}))_{y_2}=0 &\text{ on }\gsymm,\\
 M_1(\hat{v}\pm(v_1-W_{y_1}))\leq 0 &\text{ on }\gcone,\\
\hat{v}\pm(v_1-W_{y_1})\geq 0 &\text{ on }\gball,
\end{cases}
\eeq
where we have used that $$\big|\frac{1}{\epsilon}\frac{\nu_1}{\tilde{a}^{11}} f_1-M_1W_{y_1}\big|\leq C(|f_1|+|D^2W|+\frac{1}{y_2}|DW|)\leq CF_1|y|^{\al-1} \text{ on }\gcone.$$
As $D(u-V-W)(0)=0$, we also have the one-point Dirichlet condition $\hat{v}\pm(v_1-W_{y_1})(0)=0$. 
Applying the first version of the comparison principle in Theorem \ref{thm:comparison}, we obtain that
$|v_1-W_{y_1}|\leq |\hat{v}|$, and hence, applying also the estimate for $V$, 
\beq\label{ineq:v1est}
|u_{y_1}(y)|\leq C\big(F_1+G_0+\|u\|_1\big)|y|^{\al}.
\eeq
Next, we make a similar argument for a second derivative direction $w=v_1+\eps v_2$, where $\eps>0$ is sufficiently small. First, we derive a PDE for $w$ in the domain $\om$: Define $\tilde W=W_{y_1}+\eps W_{y_2}$. Then
$$0=L_0(w-\tilde W)-\eps\frac{b_0^{2,1}}{y_2^2}(u-V-W)_{y_2}=L_0(w-\tilde W)-\frac{b_0^{2,1}}{y_2^2}(w-\tilde W)+\frac{b_0^{2,1}}{y_2^2}(u-V-W)_{y_1},$$
hence 
\beq
L_0(w-\tilde W)-\frac{b_0^{2,1}}{y_2^2}(w-\tilde W)=-\frac{b_0^{2,1}}{y_2^2}(u-V-W)_{y_1}.
\eeq
Note that the zero order term on the left comes equipped with the correct sign for application of the comparison principle as $b^{2,1}_0>0$. Moreover, by Lemmas \ref{lemma:V} and \ref{lemma:W} and \eqref{ineq:v1est}, we already have an estimate for the right hand side:
\beq\label{ineq:pdeest}
\big|\frac{b_0^{2,1}}{y_2^2}(u-V-W)_{y_1}\big|\leq C\big(F_1+G_0+\|u\|_1\big)\frac{|y|^\al}{y_2^2}.\eeq
Next, we find an oblique derivative condition on $\gcone$ by using the PDE for $u-V$ in $z$ coordinates and the boundary condition $(u-V)_{z_1z_2}=0$ on $\gcone$:
\beqas
\d_{z_1}w=&\,\frac{1}{\epsilon}\d_{z_1}\big((\nu_1+\eps \nu_2)(u-V)_{z_1}+(\beta_2-\eps \beta_1)(u-V)_{z_2}\big)=\frac{1}{\epsilon}(\nu_1+\eps \nu_2)(u-V)_{z_1z_1}\\
=&\,\frac{1}{\epsilon}\frac{\nu_1+\eps \nu_2}{\tilde{a}^{11}} f_1-\frac{1}{\epsilon}(\nu_1+\eps \nu_2)\frac{\tilde{a}^{22}}{\tilde a^{11}}(u-V)_{z_2z_2}-\frac{1}{\epsilon}\frac{\nu_1+\eps \nu_2}{\tilde a^{11}}\frac{b_0^{2,1}}{y_2}(u-V)_{y_2}\\
=&\,\frac{1}{\epsilon}\frac{\nu_1+\eps \nu_2}{\tilde{a}^{11}} f_1-\frac{1}{\epsilon}\frac{\nu_1+\eps \nu_2}{\beta_2-\eps \beta_1}\frac{\tilde{a}^{22}}{\tilde a^{11}}\big((\nu_1+\eps \nu_2)(u-V)_{z_1z_2}+(\beta_2-\eps \beta_1)(u-V)_{z_2z_2}\big)\\
&+\frac{1}{\epsilon}\frac{\nu_1+\eps \nu_2}{\tilde a^{11}}\frac{\beta_1}{\beta_2-\eps \beta_1}\frac{b_0^{2,1}}{y_2}w\\
=&\,\frac{1}{\epsilon}\frac{\nu_1+\eps \nu_2}{\tilde{a}^{11}} f_1-\frac{\nu_1+\eps \nu_2}{\beta_2-\eps \beta_1}\frac{\tilde a^{22}}{\tilde a^{11}}\d_{z_2}w+\frac{1}{\epsilon}\frac{\nu_1+\eps \nu_2}{\tilde a^{11}}\frac{\beta_1}{\beta_2-\eps \beta_1}\frac{b_0^{2,1}}{y_2}w,
\eeqas
where we have also used that $w=-\frac{\be_2-\eps\be_1}{\be_1}(u-V)_{y_2}$ on $\gcone$ from the boundary condition.
Thus we arrive at the problem satisfied by $w$:
\beq
\begin{cases}
L_0(w-\tilde W)-\frac{b_0^{2,1}}{y_2^2}(w-\tilde W)=-\frac{b_0^{2,1}}{y_2^2}(u-V-W)_{y_1} &\text{ in }\om,\\
w-\tilde W=u_{y_1}-V_{y_1}-W_{y_1} &\text{ on }\gsymm,\\
 M_2(w-\tilde W)=\frac{1}{\epsilon}\frac{\nu_1+\eps \nu_2}{\tilde{a}^{11}} f_1- M_2\tilde W &\text{ on }\gcone,
\end{cases}
\eeq
where $M_2$ is the operator acting on functions $\psi$ by $$ M_2\psi=\be_0\cdot D\psi+\frac{\nu_1+\eps \nu_2}{\beta_2-\eps \beta_1}\frac{\tilde a^{22}}{\tilde a^{11}}\tau\cdot D\psi-\frac{1}{\epsilon}\frac{\nu_1+\eps \nu_2}{\tilde a^{11}}\frac{\beta_1}{\beta_2-\eps \beta_1}\frac{b_0^{2,1}}{y_2}\psi.$$
Taking $\eps>0$ sufficiently small so that $\nu_1+\eps\nu_2>0$ (recall as $\nu$ is inward pointing, $\nu_1=\sin\th_0>0$) and $\textup{sgn}(\beta_2-\eps \beta_1)=\textup{sgn}(\be_2)$ (recall $\be_2\neq 0$ by assumption), we have that the zero order term in $M_2$ has a negative coefficient. If $\eps>0$ is sufficiently small, we also have the estimate of Lemma \ref{lemma:Miller} for the Miller barrier. Therefore we may take $\bar{v}=\overline{C}v_\al$ with $\overline{C}=C_2\big(F_1+G_0+\|u\|_1\big)$ with $C_2>0$. Using the  positivity of $v_\al$, we have
$$L_0\bar v-\frac{b_0^{2,1}}{y_2^2}\bar v\leq -\overline{C}c_*\frac{b_0^{2,1}}{y_2^2}|y|^{\al},
$$
where $c_*>0$ is as in \eqref{ineq:miller} and $b_0^{2,1}>0$.
 Thus from \eqref{ineq:pdeest}, for $C_2$ sufficiently large, we have
 $$L_0\big(\bar v\pm(w-\tilde{W})\big)-\frac{b_0^{2,1}}{y_2^2}\big(\bar v\pm(w-\tilde{W})\big)\leq 0.$$
 Similarly, on $\gsymm$, we use the Neumann condition $u_{y_2},V_{y_2},W_{y_2}=0$ to make the estimate
 $$\bar v\pm(w-\tilde{W})\geq \bar v-\big|u_{y_1}-V_{y_1}-W_{y_1}\big|\geq 0,$$
 where we have used the already obtained estimate $|u_{y_1}(y)|\leq C\big(F_1+G_0+\|u\|_1\big)|y|^{\al}$ and the estimates on $DV$, $DW$.
 
  We therefore apply the second form of the comparison principle in Theorem \ref{thm:comparison} to $\bar{v}\pm(w-\tilde{W})$. This gives
$$|u_{y_1}+\eps u_{y_2}|\leq C\big(F_1+G_0+\|u\|_1\big)|y|^{\al},$$
and hence also
$$|Du|\leq C\big(F_1+G_0+\|u\|_1\big)|y|^{\al}.$$
\textbf{Step 3:} We now conclude via a standard scaling argument. For the convenience of the reader, we include the argument here. As will become clear, such an argument could be performed either on $\om$ or on the original domain $\Om$. For ease of notation, we continue to work on $\om$.

As we have assumed without loss of generality that $u(0)=0$, we have obtained the inequality
\beq\label{ineq:scaling}
|u(y)|\leq C_0|y|^{\al+1},
\eeq
where $C_0=C\big(F_1+G_0+\|u\|_1\big)$. 

Let $y\in\overline{\om}[R/2]$ (without loss of generality, we suppose here $R\geq 1$), $y\neq 0$, and recall the notation $d_y=|y|$. Then at least one of the following is true:
\begin{itemize}
\item[(i)] $B_{\frac{d_y}{10 }}(y)\subset\om$,
\item[(ii)] $y\in B_{\frac{d_{\hat{y}}}{2}}(\hat{y})$ for some $\hat{y}\in\gcone$,
\item[(iii)] $y\in B_{\frac{d_{\hat{y}}}{2}}(\hat{y})$ for some $\hat{y}\in\gsymm$.
\end{itemize}
We focus on case (ii) here, as cases (i) and (iii) may be treated similarly (for case (iii) we return to $\Omega$ and note that $\gsymm$ lies in the interior of $\Omega$). We therefore assume we have a point $\hat{y}\in\gcone$ such that $\hat{d}=\frac{1}{2}d_{\hat{y}}\in(0,1)$ and derive an estimate on $B_{\frac{\hat{d}}{2}}(\hat{y})$.

Define new coordinates $z=\frac{y-\hat{y}}{\hat{d}}$. Rescaling $\om\cap B_{r\hat{d}}(\hat{y})$ for $r\in(0,1]$ gives us the new domain
$$\om_r^{\hat{y}}:=\begin{cases}
B_r(0)\cap\{z_2<\tan\th_0 z_1\} &\text{ if }\th_0\in(0,\frac{\pi}{2}),\\
B_r(0)\cap\{z_2>\tan\th_0 z_1\} &\text{ if }\th_0\in(\frac{\pi}{2},\pi).
\end{cases}$$
On $\om_1^{\hat{y}}$, we define a new unknown $$v(z)=\frac{u(\hat{y}+\hat{d}z)}{\hat{d}^{1+\al}}.$$
Then from the inequality \eqref{ineq:scaling}, we have the estimate
$$\|v\|_{0,\om_1^{\hat{y}}}= \|u\|_{0,\om\cap B_{\hat{d}}(\hat{y})}\hat{d}^{-1-\al}\leq CC_0.$$
Defining also
$$\hat{f}(z)=\hat{d}^{1-\al}f(\hat{y}+\hat{d}z),\quad \hat{g}(z)=\hat{d}^{-\al}g(\hat{y}+\hat{d}z),$$
one easily sees that $v$ satisfies the equation
\beq
\begin{cases}
a^{ij}\d_{ij}v+\hat{d}b^i\d_i v+\hat{d}^2 cv=\hat{f} & \text{ in } \om_1^{\hat{y}},\\
\be\cdot Dv+\be^0 v=\hat{g} &\text{ on }\om_{1,\textrm{cone}}^{\hat{y}}=\om_1^{\hat{y}}\cap\{z_2=\tan\th_0 z_1\}, 
\end{cases}
\eeq
where the coefficients $a^{ij},b^i,c,\be,\be^0$ are all evaluated at $y(z)=\hat{y}+\hat{d}z$. It is then straightforward to obtain the estimates
$$\|\hat{f}\|_{0,\al,\om_1^{\hat{y}}}\leq C\|f\|_{0,\al,\om}^{(1-\al)},\quad \|\hat{g}\|_{1,\al,\om_{1,\textrm{cone}}^{\hat{y}}}\leq C\|g\|_{1,\al,\gcone}^{(-\al)}.$$
Thus the standard elliptic regularity theory for oblique problems on smooth domains, e.g.~in \cite[Theorem 6.30]{GT} (note that $\hat{d}b^i$ is a bounded, regular coefficient on $\om_{1}^{\hat{y}}$), gives the estimate
$$\|v\|_{2,\al,\om_{1/2}^{\hat{y}}}\leq C\big(\|v\|_{0,\om_{1}^{\hat{y}}}+\|\hat{f}\|_{0,\al,\om_1^{\hat{y}}}+\|\hat{g}\|_{1,\al,\om_{1,\textrm{cone}}^{\hat{y}}}\big)\leq C\big(C_0+\|f\|_{0,\al,\om}^{(1-\al)}+\|g\|_{1,\al,\gcone}^{(-\al)}\big).$$
It is simple to check the equivalence $$\|v\|_{2,\al,\om_{1/2}^{\hat{y}}}\approx \|u\|_{2,\al,\om\cap B_{\hat{d}/2}(\hat{y})}^{(-1-\al)},$$ and so we use this estimate and similar estimates for interior balls (including balls centred on $\gsymm$ as discussed above) to cover the domain and apply \eqref{ineq:G_0}--\eqref{ineq:F_1} to
 arrive (via an argument such as that of \cite[Theorem 4.8]{GT}) at the estimate
\beqa
\|u\|_{2,\al}^{(-1-\al)}\leq&\, C\big(F_1+G_0+\|f\|_{0,\al}^{(1-\al)}+\|g\|_{1,\al}^{(-\al)}\big)\\
\leq&\, C\big(\|f\|_{0,\al}^{(1-\al)}+\|g\|_{1,\al}^{(-\al)}+\eta(\rho)\|u\|_{2,\al}^{(-1-\al)}+\|u\|_{1,\de}^{(-1)}+\|u\|_0\big),
\eeqa
where $\eta(\rho)=\eta_1(\rho)+\eta_2(\rho)+\rho^{\al-\de}$.
Choosing $\de>0$ such that $\de<\al$ and $\de\leq(1+\al)^{-1}$, we use the interpolation estimate for weighted H\"older spaces, \eqref{ineq:holderinterp}, to observe that
$$\|u\|_{1,\de}^{(-1)}\leq C\big(\|u\|_{2,\al}^{(-1-\al)}\big)^\th\big(\|u\|_0\big)^{1-\th}$$
for some $\th\in(0,1)$. Thus, by Young's inequality, for any $\eps>0$, we have 
$$\|u\|_{2,\al}^{(-1-\al)}\leq C\big(\|f\|_{0,\al}^{(1-\al)}+(\eta(\rho)+\eps)\|u\|_{2,\al}^{(-1-\al)}+\|g\|_{1,\al}^{(-\al)}+\|u\|_0\big).$$
Taking now $\rho,\eps>0$ sufficiently small so that $C(\eta(\rho)+\eps) <\frac{1}{2}$, we conclude the proof.
\end{proof}
\begin{proof}[Proof of Lemma \ref{lemma:V}]
We find $V$ by applying \cite[Theorem 1.4]{L88} to the following problem on all of $\om$:
\beq\label{eq:V}
\begin{cases}
a^{ij}_0\d_{ij}V=0 &\text{ in }\om,\\
\bar\be_0\cdot DV+\bar\be^0 V=\bar g &\text{ on }\gcone\cup\gball,\\
V_{y_2}=0 &\text{ on }\gsymm,
\end{cases}
\eeq
where $\bar g$ and $\bar\be_0$ are H\"older continuous extensions of $g_0$ and $\be_0$ to all of $\gcone\cup\gball$ such that $\|\bar g\|_{1,\de}^{(-\al)}\leq C\| g_0\|_{1,\de}^{(-\al)}$ and $\bar\be_0$ remains uniformly oblique with a similar estimate. $\bar\be^0$ is a smooth scalar function such that $\bar\be^0\leq 0$, $\bar\be^0\not\equiv 0$, and $\bar\be^0=0$ on $\gcone[\rho]$. Then Theorem 1.4 of \cite{L88} gives the estimate
$$\|V\|_{2,\de}^{(-1-\al),\d\om}\leq C\|\bar g\|_{1,\de}^{(-\al)}\leq C\| g_0\|_{1,\de}^{(-\al)},$$
where the H\"older spaces with weight up to the boundary were defined in \eqref{def:Sigmaholder}.
To remove the dependence on the full boundary $\d\om$ in the norm on the left, we use standard regularity theory away from the vertex and observe that the arguments of \cite{L88} apply equally when we only weight the H\"older spaces with distance to the vertex (as the source term in the PDE of \eqref{eq:V} is zero, and hence is not singular along $\d\om$). This gives us the desired estimate,
$$\|V\|_{2,\de}^{(-1-\al)}\leq C\| g_0\|_{1,\de}^{(-\al)}.$$
\end{proof}

\begin{proof}[Proof of Lemma \ref{lemma:W}]
We define $W$ by solving a further auxiliary problem. We begin by observing that, by construction and assumption \eqref{eq:L0bar} on $A^{ij}(0)$, $L_0$ extends to the (axi-symmetric) operator $\overline{L}_0$ on $\Om$ defined by
$$\overline{L}_0w=A^{ij}(0)\d_{ij}w \text{ for $w:\Om\to\R$}.$$
We now define a smooth vector field $\tilde\be$ such that $\tilde\be=e_n$ on $B_{R/2}(0)\cap\d\Om$ and $\tilde\be$ is uniformly oblique on all of $\d\Om$ and axi-symmetric (here $e_n$ is the standard Cartesian unit vector in the $x_n$ direction). Choose a smooth function $\tilde\be^0\leq 0$ on $B_{2R}(0)$ such that $\tilde\be^0=0$ on $B_{R/2}$ and $\tilde\be^0\not\equiv0$. We then define $W$ as the solution to the problem 
\beq
\begin{cases}
\overline{L}_0 W=f_1 &\text{ in }\Om,\\
\tilde\be\cdot DW- \tilde\be^0W=0 &\text{ on }\d\Om.
\end{cases}
\eeq
Such a solution exists by \cite[Corollary 3.3]{L87} as the oblique vector $\tilde\be$ is continuous and the zero order term in the boundary condition is negative. Moreover, that same theorem gives the estimate
$$\|W\|_{2,\de,\Om}^{(-1-\al),\d\Om}\leq C\|f_1\|_{0,\de,\Om}^{(1-\al),\d\Om},$$
where we recall the notation for H\"older spaces weighted by distance to the boundary as in \eqref{def:Sigmaholder}.
As in the previous proof, we note that the methods of \cite{L87} (see especially Lemmas 2.1--2.2 and also the proof of \cite[Lemma 1.2]{L88}) improve this estimate to
\beq\label{eq:Wnorm}
\|W\|_{2,\de,\Om}^{(-1-\al)}\leq C\|f_1\|_{0,\de,\Om}^{(1-\al)}.
\eeq
It only remains to show that $W$, in the $y$ coordinates, satisfies also the Neumann condition $W_{y_2}=0$ on $\gsymm$ and deduce the growth conditions on $DW$, $D^2W$. From the uniqueness part of \cite[Theorem 3.2]{L87}, as the operator $\overline{L}_0$ is invariant under axial rotations by \eqref{eq:L0bar} and the data and boundary condition are both axi-symmetric, $W$ is also axi-symmetric, so we may return to $\om$ and deduce $W_{y_2}=0$ on $\gsymm$. Finally, combining the two boundary conditions at the origin, we deduce that $DW(0)=0$, and therefore obtain the growth rates $|DW|\leq C\|f_1\|_{0,\de,\Om}^{(1-\al)}|y|^\al$, $|D^2W|\leq C\|f_1\|_{0,\de,\Om}^{(1-\al)}|y|^{\al-1}$ from \eqref{eq:Wnorm} (where we have recalled that $\|W\|_{1,\al}\leq C\|W\|_{2,\de}^{(-1-\al)}$ for the former estimate).
\end{proof}
The next theorem covers the remaining part of Theorem \ref{thm:rough}: the case $s\in(s_1, 0]$. As stated in the introduction, the theorem is a perturbative result around the case $s=0$, where the boundary operator is continuous.
\begin{thm}\label{thm:perturb}
Let $\th_0\in(0,\pi)$ and $u\in C_{2,\al}^{(-1-\al)}$ be an axi-symmetric solution of \eqref{eq:fullspace}. We assume \eqref{ass:1}--\eqref{ass:5} hold. Then there exists $\al_0=\al_0(\th_0,\tilde\epsilon,\La/\la)\in(0,1)$ such that if $\al\in(0,\al_0)$, then there exists $s_1=s_1(\th_0,\al,\La/\la)\in(0,\frac{\pi}{2})$ such that if 
\beq\label{eq:beta02}
\be_0=\lim_{\substack{y\to 0\\ y\in\gcone}}\beta(y)=(\cos(s),\sin(s))\quad\text{ such that }s\in(-s_1,s_1),
\eeq then
$$\|u\|_{2,\al}^{(-1-\al)}\leq C\big(\|f\|_{0,\al}^{(1-\al)}+\|g\|_{1,\al}^{(-\al)}+\|u\|_0\big).$$
The constant $C$ depends on the norms $\|A^{ij}\|_{0,\al}^{(0)}$, $\|A^i\|_{0,\al}^{(1-\al)}$, $\|A^0\|_{0,\al}^{(1-\al)}$, $\|\be\|_{1,\al,\gcone}^{(-\al)}$, $\|\be^0\|_{1,\al,\gcone}^{(-\al)}$, as well as $\Lambda$, $\la$, $\eta_1$, $\th_0$, $\tilde\epsilon$, $R$, $s_1$ and $\al$.

If $[\th_1,\th_2]\subset(0,\pi)$, then $\al_0$ may be uniform with respect to $\th_0\in[\th_1,\th_2]$ and then $s_1$ and $C$ may also be taken uniform with respect to $\th_0$ and $\alpha\in(0,\al_0)$ .
\end{thm}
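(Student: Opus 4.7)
The strategy is perturbative around $s=0$, where the vertex value $\be_0=(1,0)$ in $y$ coordinates corresponds in $x$ coordinates to $\be_0=e_n$. This vector extends to a globally continuous, uniformly oblique vector field on $\d\Om$, so Lieberman's theory for continuous oblique operators \cite{L87} (already invoked in the proof of Lemma \ref{lemma:W}) yields a $C^{1,\al}$ estimate up to the vertex of the cone whenever $\al\in(0,\al_0)$ for some $\al_0=\al_0(\th_0,\La/\la)>0$. This $\al_0$ is the constant appearing in the statement, and the analysis below works for any $\al\in(0,\al_0)$.

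The proof follows the same architecture as that of Theorem \ref{thm:main}. One localizes near the origin, assumes $u(0)=Du(0)=0$ without loss of generality, and freezes the principal coefficients. The structural change is that the correctors $V$ and $W$ of Lemmas \ref{lemma:V}--\ref{lemma:W} must be rebuilt around the continuous operator $e\cdot D$, with $e:=(1,0)$, rather than the (generally discontinuous) $\be_0\cdot D$. Concretely, rewrite the frozen boundary condition on $\gcone$ as
\beqs
e\cdot Du=\tilde g_0,\qquad \tilde g_0:=g-(\be-\be_0)\cdot Du-\be^0u-(\be_0-e)\cdot Du,
\eeqs
define $V$ as the axi-symmetric solution of $\overline{L}_0V=0$ in $\Om$ with $e\cdot DV=\tilde g_0$ on $\d\Om$ (after axi-symmetric H\"older extension of $\tilde g_0$), and construct $W$ verbatim as in Lemma \ref{lemma:W}. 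The estimate $\|V\|_{2,\de}^{(-1-\al)}\le C\|\tilde g_0\|_{1,\de}^{(-\al)}$ follows directly from \cite[Theorem 3.2]{L87} for $\al\in(0,\al_0)$, since $e$ is continuous and uniformly oblique on $\d\Om$.

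Substituting these correctors into the equation, the remainder $u-V-W$ satisfies a constant coefficient problem with continuous oblique boundary operator $e\cdot D$ and essentially homogeneous data, so Lieberman's estimate applies once more. Combining with the standard scaling and covering argument of Step 3 in the proof of Theorem \ref{thm:main}, together with the H\"older interpolation of Lemma \ref{lemma:holder}(ii), yields an inequality of the form
\beqs
\|u\|_{2,\al}^{(-1-\al)}\le C\bigl(\|f\|_{0,\al}^{(1-\al)}+\|g\|_{1,\al}^{(-\al)}+\|u\|_0\bigr)+C\bigl(|\be_0-e|+\eta(\rho)+\eps\bigr)\|u\|_{2,\al}^{(-1-\al)},
\eeqs
in which the extra coefficient $|\be_0-e|\le C|s|$ is precisely the smallness gained from $\be_0$ being close to $e$. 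Choosing $s_1=s_1(\th_0,\al,\La/\la)\in(0,\tfrac{\pi}{2})$ and $\rho,\eps$ small enough that the total perturbation coefficient falls below $\tfrac{1}{2}$ closes the estimate by absorption. The main obstacle is quantitative: the constant $C$ in front of $|s|$ blows up as $\al\to\al_0$, so $s_1$ must be allowed to depend on $\al$; the uniform claim on $\th_0\in[\th_1,\th_2]$ then follows because both $\al_0$ and the constants of \cite[Theorem 3.2]{L87} can be chosen uniformly on this compact interval.
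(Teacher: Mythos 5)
Your proposal is correct and shares the paper's central idea: perturb around $s=0$, where the boundary operator becomes a continuous oblique vector field (namely $\d_{x_n}$ in $x$-coordinates), and absorb an error of size $O(|s|)$ into the left-hand side. However, the execution differs substantially from, and is more elaborate than, the paper's. The paper simply replaces $\be$ by $\tilde\be=\be-(0,\sin s)$ throughout $\gcone\cup\gball$, so that $u$ satisfies the variable-coefficient oblique problem with the \emph{continuous} boundary operator $\tilde\be\cdot D+\be^0$ and modified data $g-\sin(s)\d_{y_2}u$, then cites \cite[Proposition 3.1]{L87} as a black box to obtain the a priori estimate \eqref{ineq:absorb} for this modified problem, and finally absorbs $C\|\sin(s)u_{y_2}\|_{1,\al}^{(-\al)}\leq Cs_1\|u\|_{2,\al}^{(-1-\al)}$. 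You instead redo the whole architecture of the proof of Theorem \ref{thm:main} --- localization, coefficient freezing, the correctors $V$ and $W$ now built around the operator $e\cdot D$ rather than $\be_0\cdot D$, then the rescaling/covering step --- which amounts to re-deriving the content of \cite[Proposition 3.1]{L87} in this particular setting rather than invoking it. The argument does close, and your remark that the constant multiplying $|s|$ degenerates as $\al\to\al_0$ (forcing $s_1$ to depend on $\al$) is a correct and worthwhile observation, but the corrector machinery buys nothing here: once $e\cdot D$ is continuous there is no need for the barrier-method decomposition, and a direct citation of Lieberman's variable-coefficient a priori estimate is cleaner. One technical point you should make explicit (the paper does): Lieberman's estimate naturally carries a weight of distance to the whole boundary $\d\Omega$, and this must be upgraded to a weight by distance to the vertex alone, via the argument sketched in the proof of Lemma \ref{lemma:W}.
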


\begin{proof}
We perturb around the case of a continuous oblique vector, $s=0$ by recalling the results of Lieberman \cite[Proposition 3.1]{L87} (alternatively see \cite[Section 4.1]{L}). In particular, by working on the symmetry domain $\om$, we may define a new boundary vector
$$\tilde\be=\be-(0,\sin(s)),$$
so that $u$ satisfies
\beq
\begin{cases}
a^{ij}\d_{ij}u+b^i\d_iu+cu=f &\text{ in }\om,\\
\tilde\be \cdot Du+\be^0u=g-\sin(s)\d_{y_2}u &\text{ on }\gcone\cup\gball,\\
u_{y_2}=0 &\text{ on }\gsymm.
\end{cases}
\eeq
From \cite[Proposition 3.1]{L87} (applying the same argument as in the proof of Lemma \ref{lemma:W} above to weight only by distance to the vertex, not all of $\d\om$), we then have the estimate
\beq\label{ineq:absorb}
\|u\|_{2,\al,\om}^{(-1-\al)}\leq C\big(\|f\|_{0,\al}^{(1-\al)}+\|g\|_{1,\al}^{(-\al)}+\|\sin(s)\d_{y_2}u\|_{1,\al}^{(-\al)}+\|u\|_0\big),
\eeq
where $C>0$ depends on the H\"older norms of the coefficients and the ellipticity. 
Thus if $s\in(-s_1,s_1)$ and $s_1$ is sufficiently small, then we make the further estimate
$$C\|\sin(s)u_{y_2}\|_{1,\al}^{(-\al)}\leq Cs_1\|u\|_{2,\al}^{(-1-\al)}\leq \frac 12\|u\|_{2,\al}^{(-1-\al)}$$
for $s_1$ sufficiently small and conclude the claimed estimate by absorbing this term onto the left in \eqref{ineq:absorb} and returning to the original domain $\Om$.

To show that the estimates may be taken to be locally uniform with respect to $\th_0$, a careful inspection of the proofs of \cite[Lemma 2.1, Proposition 3.1]{L87} (see also \cite[Section 4.5]{L}) shows that the dependence of $\al_0$ and the constant $C>0$ on $\th_0$ comes solely from the estimates of the Miller barrier, as in \S\ref{sec:barrier} and Lemma \ref{lemma:Miller} below. However, it is clear from the construction in \cite{Miller} (compare \cite[Chapter 3]{L} and also Remark \ref{rmk:barrier} below) that the barrier function $v_\al$ and the constant $\al_0(\th_0)$ depend continuously on $\th_0$. Thus, given $[\th_1,\th_2]\subset(0,\pi)$, we may take $\al_*<\min_{\th_0\in[\th_1,\th_2]}\{\al_0(\th_0)\}$ and use a single barrier $v_\al$ with $\al\leq\al_*$ to show that the constants are all locally uniform with respect to $\th_0$. The uniform dependence of $s_1$ on $\al\leq \al_*$ then follows directly from the proof above as the constant $C>0$ may now be taken to be uniform.
\end{proof}

\section{Barrier functions for oblique problems on cones}\label{sec:barrier}
A vital tool in the proof of Theorem \ref{thm:main} is the \textit{Miller barrier}. We recall from \cite{Miller} (see also \cite[Chapter 3]{L}) that for each $\al\in(0,1)$ and $\La>\la>0$, there exists a function $F_\al(\th)$ such that 
$$v_\al=r^\al F_\al(\th)$$
satisfies, for all constant coefficient operators of the form $A_0^{ij}$ with ellipticity $\la$ and upper bound $\La$,
$$A^{ij}_0v_\al\leq 0\text{ in }\Om,$$
and that, moreover, $F_\al'(0)=0$, so that $v_\al$ is a well-defined axi-symmetric function on $\Om$. By construction (see \cite[Lemmas 3.5--3.8]{L}), the limits 
\beq
\lim_{\al\to0+}F_\al(\th)=1,\quad \lim_{\al\to0+} F_\al'(\th)=0
\eeq
hold uniformly for $\th\in(0,\pi)$.
For each $\th_0\in(0,\pi)$, there exists an $\al_0(\th_0)\in(0,1]$ such that for each $\al\in(0,\al_0)$, $c_*\leq F_\al(\th)\leq 1$ for all $\th\in[0,\th_0]$ and some $c_*>0$ (depending on $\al$). In addition, $F_\al'(\th)<0$ on $(0,\th_0]$. For $\al\in(0,\al_0)$, we refer to the function $v_\al$ as the \textit{Miller barrier} and note that, by construction,
\beq\label{ineq:miller}
c_*|y|^\al\leq v_\al\leq |y|^\al.
\eeq
To use the Miller barrier in proving Theorem \ref{thm:main}, we need to investigate its behaviour under certain oblique boundary operators. Converting to axi-symmetric coordinates, we first need some notation. Let $\be_0=(\beta_1,\beta_2)$ be a constant, inward pointing, oblique vector and suppose that $A^{ij}_0$ is a constant matrix of ellipticity $\la$ and upper bound $\La$. Define coordinates $(z_1,z_2)$ on $\om$ as in \eqref{def:zcoords} such that $\d_{z_1}=\d_\beta$, $\d_{z_2}=\d_\tau$, where $\tau=\nu^\perp$ is the unit tangent to $\gcone$ and $\nu$ is the inward unit normal. In these coordinates, following \eqref{def:tildecoords}, the elliptic operator takes the form
$$A^{ij}_0\d_{x_ix_j}\psi=\tilde{a}^{ij}\d_{z_iz_j}\psi+\frac{b_0^{2,1}}{y_2}\d_{y_2}\psi\text{ for axi-symmetric functions }\psi,$$
where the constant coefficients $\tilde{a}^{ij}$ satisfy the ellipticity
$$\frac{\la}{\epsilon^2}\leq \tilde{a}^{11},\tilde{a}^{22}\leq\frac{\La}{\epsilon^2},\quad \epsilon=\be_0\cdot\nu>0.$$
With this notation, we obtain the following lemma.
\begin{lemma}\label{lemma:Miller}
Let $\be_0=(\beta_1,\beta_2)$ be a constant, inward pointing, oblique vector on $\gcone$ such that $\beta_1,\beta_2\neq0$ have the same sign, $\th_0\in(0,\pi)$. Suppose that the constant coefficients $\tilde{a}^{ij}$ are derived from a constant matrix $A^{ij}_0$ of ellipticity $\la$ and upper bound $\La$ as above. Then there exists $\al_1\in(0, \al_0]$ such that for all $\al\in(0,\al_1)$, there exists $c_1(\al)>0$ so that the Miller barrier $v_\al$ satisfies
$$ M_1v_\al:=\be_0\cdot Dv_\al+\frac{\nu_1}{\beta_2}\frac{\tilde a^{22}}{\tilde a^{11}}\tau\cdot Dv_\al-\frac{1}{\epsilon}\frac{\nu_1}{\tilde a^{11}}\frac{\beta_1}{\beta_2}\frac{b_0^{2,1}}{y_2}v_\al\leq -c_1|y|^{\al-1} \text{ on }\gcone,$$
and also, for $\eps>0$ sufficiently small (depending on $\th_0$, $\beta_1$, $\beta_2$, $\al$, $\La$, $\la$), 
$$ M_2v_\al:=\be_0\cdot Dv_\al+\frac{\nu_1+\eps \nu_2}{\beta_2-\eps \beta_1}\frac{\tilde a^{22}}{\tilde a^{11}} \tau\cdot  Dv_\al-\frac{1}{\epsilon}\frac{\nu_1+\eps \nu_2}{\tilde a^{11}}\frac{\beta_1}{\beta_2-\eps \beta_1}\frac{b_0^{2,1}}{y_2}v_\al\leq -c_1|y|^{\al-1} \text{ on }\gcone.$$
\end{lemma}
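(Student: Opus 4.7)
The plan is to evaluate each term of $M_1 v_\al$ explicitly on $\gcone$ in polar coordinates, factor out the common rate $r^{\al-1}$, and then exploit the asymptotic behavior of $F_\al$ from Miller's construction to identify the dominant contribution. First I would write $\tau=(\cos\th_0,\sin\th_0)$ and $\nu=(\sin\th_0,-\cos\th_0)$ for the tangent and inward normal on $\gcone$, and use the identities $\tau\cdot Dv_\al = \d_r v_\al = \al r^{\al-1}F_\al(\th_0)$ and $\nu\cdot Dv_\al = -\tfrac{1}{r}\d_\th v_\al = -r^{\al-1}F_\al'(\th_0)$. Decomposing $\be_0=\epsilon\nu+(\be_0\cdot\tau)\tau$, one obtains
\beqs
\be_0\cdot Dv_\al = -\epsilon r^{\al-1}F_\al'(\th_0) + (\be_0\cdot\tau)\,\al r^{\al-1}F_\al(\th_0),
\eeqs
while $v_\al/y_2 = r^{\al-1}F_\al(\th_0)/\sin\th_0$ on $\gcone$, since $\nu_1=\sin\th_0>0$.

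Substituting these into the definition of $M_1$, I would factor out $r^{\al-1}$ to write
\beqs
M_1 v_\al = r^{\al-1}\Big[-\epsilon F_\al'(\th_0) + \Big(\be_0\cdot\tau + \frac{\nu_1}{\beta_2}\frac{\tilde a^{22}}{\tilde a^{11}}\Big)\al F_\al(\th_0) - \frac{\nu_1\beta_1 b_0^{2,1}}{\epsilon\,\tilde a^{11}\beta_2\sin\th_0}F_\al(\th_0)\Big].
\eeqs
The sign assumption $\beta_1\beta_2>0$ together with the positivity of $\nu_1$, $\tilde a^{11}$, $\epsilon$ and $b_0^{2,1}$ makes the third bracketed term strictly negative, with coefficient bounded away from zero in terms of $\La/\la,\th_0$, $\be_0$. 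The first two terms, by contrast, vanish as $\al\to 0^+$: the uniform limits $F_\al'(\th_0)\to 0$ and $F_\al(\th_0)\to 1$ recalled at the start of \S\ref{sec:barrier} force $|F_\al'(\th_0)|+\al F_\al(\th_0)\to 0$. Since $F_\al(\th_0)\geq c_*>0$ for $\al\in(0,\al_0)$, one may choose $\al_1\in(0,\al_0)$ small enough that the negative zero-order term majorises the other two in absolute value, producing the bound $M_1 v_\al\leq -c_1 r^{\al-1}$ on $\gcone$ with some $c_1>0$.

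For $M_2 v_\al$, the argument is a perturbation of the previous one: replacing $\nu_1\mapsto\nu_1+\eps\nu_2$ and $\beta_2\mapsto\beta_2-\eps\beta_1$ changes each coefficient by $O(\eps)$. Taking $\eps>0$ small enough (depending on $\th_0,\beta_1,\beta_2,\La/\la$) guarantees both that $\nu_1+\eps\nu_2>0$ and that $\textrm{sgn}(\beta_2-\eps\beta_1)=\textrm{sgn}(\beta_2)$, so the sign of the coefficient of the zero-order term is preserved. Then the same choice of $\al_1$ (possibly shrunk) yields the second inequality with the same constant structure. I expect the only subtle point to be verifying that the uniform convergence $F_\al,F_\al'\to 1,0$ from \cite{Miller} is strong enough to dominate the $\al$-independent negative coefficient; this is precisely the role of the lower bound $F_\al(\th_0)\geq c_*$, which makes the selection of $\al_1$ purely a quantitative comparison of three explicit terms rather than a delicate asymptotic analysis.
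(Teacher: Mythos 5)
Your proposal is correct and takes essentially the same route as the paper's proof: compute $M_1 v_\al$ explicitly on $\gcone$, factor out $r^{\al-1}$, and use the uniform limits $F_\al(\th_0)\to 1$, $F_\al'(\th_0)\to 0$ together with the sign condition $\beta_1/\beta_2>0$ to show the singular zero-order term dominates for $\al$ small; the $M_2$ case is then a small perturbation. Your organization via the orthogonal decomposition $\be_0=\epsilon\nu+(\be_0\cdot\tau)\tau$ is a slightly cleaner way to arrive at the same displayed identity (the paper computes directly in the Cartesian $y$-coordinates), but the substance is identical.
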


\begin{proof}
Note first that
\beqas
(v_\al)_{y_1}=\al r^{\al-1}\cos\th F_\al(\th)-r^{\al-1}\sin\th F_\al'(\th),\quad (v_\al)_{y_2}=\al r^{\al-1}\sin\th F_\al(\th)+r^{\al-1}\cos\th F_\al'(\th).
\eeqas
Then, noting $\nu=(\sin\th_0,-\cos\th_0)$,
\beqas
M_1v_\al=&\,\be_0\cdot Dv_\al+\frac{\nu_1}{\beta_2}\frac{\tilde a^{22}}{\tilde a^{11}}\tau\cdot Dv_\al-\frac{1}{\epsilon}\frac{\nu_1}{\tilde a^{11}}\frac{\beta_1}{\beta_2}\frac{b_0^{2,1}}{y_2}v_\al\\
=&\,r^{\al-1}\Big(F_\al(\th_0)\big(\al \beta_1\cos\th_0+\al \beta_2\sin\th_0+\frac{\nu_1}{\beta_2}\frac{\tilde a^{22}}{\tilde a^{11}}(\al \nu_2\cos\th_0-\al \nu_1\sin\th_0)\big)\\
&\quad\qquad F_\al'(\th_0)\big(-\beta_1\sin\th_0+\beta_2\cos\th_0-\frac{\nu_1}{\beta_2}\frac{\tilde a^{22}}{\tilde a^{11}}(\nu_2\sin\th_0+\nu_1\cos\th_0)\big)\\
&\quad\qquad -\frac{1}{\epsilon}\frac{\nu_1}{\tilde{a}^{11}}\frac{\beta_1}{\beta_2}\frac{b_0^{2,1}}{\sin\th_0}F_\al(\th_0)\Big)\\
=&\,r^{\al-1}\Big(F_\al(\th_0)\big(-\al\be_0\cdot\tau-\al\frac{\nu_1}{\beta_2}\frac{\tilde a^{22}}{\tilde a^{11}}\big)-\be_0\cdot\nu F_\al'(\th_0)-\frac{1}{\epsilon}\frac{\beta_1}{\beta_2}\frac{b_0^{2,1}}{\tilde a^{11}}F_\al(\th_0)\Big).
\eeqas
By taking $\al>0$ sufficiently small, we recall the uniform limits as $\al\to0$, $F_\al(\th_0)\to1$ and $F_\al'(\th_0)\to0$. Thus we obtain the claimed inequality by observing that $\frac{\beta_1}{\beta_2}>0$.

Finally, a similar calculation shows that
$$ M_2v_\al\leq -c_1r^{\al-1}$$
provided first $\eps>0$ is sufficiently small and then $\al>0$ is sufficiently small.
\end{proof}

\begin{rmk}\label{rmk:barrier}
It follows from the construction of the Miller barrier as described above (and given in \cite[Chapter 3]{L}) that the dependence of $\al_0$ on $\th_0$ is monotone, and that also $v_\al$ depends continuously on $\al,\th_0$ in the admissible range. Therefore, given a fixed range $\th_0\in[\th_1,\th_2]$, one may fix a single $\al_0$ and use a single barrier $v_\al$ for the whole interval of $\th_0$, thereby making the constants $c_*$, $c_1$ uniform.
\end{rmk}

\section{Comparison Principle}\label{sec:comp}
In the proof of Theorem \ref{thm:main}, we made use of two versions of the comparison principle. We state and prove both of them together here. 
\begin{thm}\label{thm:comparison}
Let $L_0$ and $\be_0$ be the operator and boundary vector as in \eqref{def:frozen} and let $\tilde a\in\R$, $\tilde b>0$ be given. Suppose that $u\in C^2(\om)\cap C^1(\overline{\om}\setminus\{0\})\cap C(\overline{\om})$ satisfies either
\beq\label{eq:compare1}
\begin{cases}
L_0 u\leq0 & \text{ in }\om,\\
u_{y_2}=0 & \text{ on }\gsymm,\\
\be_0\cdot D u+\tilde{a}\tau\cdot D u-\frac{\tilde b}{y_2}u\leq0  & \text{ on }\gcone,\\
u\geq 0 & \text{ on } \overline{\gball},\\
u=0 & \text{ at } \{0\},
\end{cases}
\eeq
 or
\beq\label{eq:compare2}
\begin{cases}
L_0 u+\tilde{c}_0(y)u\leq0 & \text{ in }\om,\\
u\geq 0 & \text{ on }\overline{\gsymm\cup\gball},\\
\be_0\cdot D u+\tilde{a}\tau\cdot D u-\frac{\tilde b}{y_2}u\leq0  & \text{ on }\gcone,
\end{cases}
\eeq
where also $\tilde{c}_0(y)<0$ in $\om$.\\
Then $\inf u=0$. 
\end{thm}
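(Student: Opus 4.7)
The plan is to argue by contradiction. Assume $\inf u = m < 0$; by continuity and compactness of $\overline\om$ the infimum is attained at some $y^* \in \overline\om$, and I will eliminate each possibility for the location of $y^*$ in turn.

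The given boundary inequalities dispose of the easy portions immediately: under \eqref{eq:compare1} the point $y^*$ cannot lie on $\overline{\gball}$ nor at the vertex, while under \eqref{eq:compare2} it cannot lie on $\overline{\gsymm\cup\gball}$, which already contains the vertex. The remaining possibilities are (a) $y^*$ interior to $\om$; (b) $y^*$ interior to $\gsymm$ (relevant only for \eqref{eq:compare1}); and (c) $y^*$ interior to $\gcone$.

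For cases (a) and (b), the plan is to lift $u$ to the axi-symmetric function $U$ on the three-dimensional domain $\Om$, so that $L_0$ corresponds to the constant-coefficient operator $\overline{L}_0 = A^{ij}(0)\d_{ij}$ of \eqref{eq:L0bar} and the Neumann condition $u_{y_2} = 0$ on $\gsymm$ becomes precisely the compatibility condition that makes $U$ regular across the symmetry axis. Case (b) then reduces to case (a) with $y^*$ lying in the interior of $\Om$. For \eqref{eq:compare1}, the standard strong maximum principle for $\overline{L}_0 U \leq 0$ propagates the negative minimum to $\partial\Om$, contradicting the sign conditions already established there. For \eqref{eq:compare2}, the additional contribution satisfies $\tilde c_0(y^*) u(y^*) > 0$, and combined with $a^{ij}_0 u_{ij}(y^*) \geq 0$ (from positivity of $D^2u$ at an interior minimum) and $u_{y_2}(y^*) = 0$, this contradicts $L_0 u + \tilde c_0 u \leq 0$ at $y^*$ directly.

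For case (c), I would invoke the Hopf boundary point lemma. The boundary is smooth at an interior point of $\gcone$, and in \eqref{eq:compare2} the zero-order coefficient $\tilde c_0$ is nonpositive with $u(y^*) < 0$, so Hopf applies in both versions and yields $\nu \cdot Du(y^*) > 0$ strictly, while $\tau \cdot Du(y^*) = 0$ since $y^*$ is a minimum of $u$ along $\gcone$. Decomposing $\be_0 = \epsilon\nu + (\be_0\cdot\tau)\tau$ with $\epsilon = \be_0\cdot\nu > 0$, the boundary operator at $y^*$ reduces to $\epsilon\,\nu\cdot Du(y^*) - \tilde b\, y_2(y^*)^{-1} u(y^*)$, and both terms are strictly positive (using $\tilde b > 0$, $y_2(y^*) = r\sin\th_0 > 0$, and $u(y^*) = m < 0$), contradicting the boundary hypothesis. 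The main technical point I expect to require care is the reduction from $\om$ to $\Om$, namely verifying that the singular coefficient $b_0^{2,1} y_2^{-1}\d_2$ in $L_0$ really does come from $\overline{L}_0$ acting on axi-symmetric functions on $\Om$, and that the Neumann condition supplies just enough regularity at $\gsymm$ to apply the interior strong maximum principle for $\overline{L}_0$ there.
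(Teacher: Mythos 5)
Your proposal is correct and takes essentially the same route as the paper: a contradiction argument that eliminates each possible location of a negative minimum, passing to the three-dimensional domain $\Om$ to handle the interior and $\gsymm$ via the strong maximum principle, and then treating $\gcone$ directly.

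The one genuine (though minor) difference is in case (c), on $\gcone$: you invoke Hopf's boundary point lemma to obtain $\nu\cdot Du(y^*) > 0$ strictly. This is more machinery than the paper uses. Since the strict positivity needed for the contradiction is already supplied by the zero-order term $-\tilde b\,y_2^{-1}u(y^*) > 0$, the paper only needs the elementary first-order necessary condition at a boundary minimum: for $\be_0$ inward-pointing one has $\be_0\cdot Du(y^*)\geq 0$, and tangentially $\tau\cdot Du(y^*)=0$, whence the boundary operator evaluates to something $\geq -\tilde b\,y_2^{-1}u(y^*) > 0$. Your Hopf argument is not wrong, but note it requires the extra input that $u$ is non-constant (equivalently $u > m$ in an interior ball touching $y^*$); in your case-by-case structure this does follow from having already disposed of the interior case, but the paper's lighter argument sidesteps the issue entirely. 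Similarly, for \eqref{eq:compare2} in the interior you argue directly from the sign of $D^2u$ and $\tilde c_0u$ at the minimum, whereas the paper notes that on $\{u<0\}$ the differential inequality becomes strict and then applies the maximum principle; these are equivalent. In short: your proof is sound, and the differences are cosmetic.
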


\begin{proof}
First suppose that $u$ satisfies problem \eqref{eq:compare1}. We begin by returning to the domain $\Om$ by rotating around the $y_1$-axis. By \eqref{eq:L0bar}, the function $u$ then satisfies the uniformly elliptic equation obtained from \eqref{eq:fullspace} by freezing the principal coefficients at 0 and setting the lower order terms to be zero. By the strong maximum principle, $u$ cannot attain a minimum in $\Om$ unless it is constant. However, no negative constant will satisfy the oblique condition on $\gcone$ as $\tilde{b}<0$. Returning to the original domain, this implies that $u$ does not attain a negative minimum in $\om\cup\gsymm$.

Clearly $u$ cannot attain a negative minimum on either $\overline{\gball}$ or at $0$ by the Dirichlet conditions imposed there.

Finally, we check that $u$ does not attain a negative minimum on $\gcone$. If $u$ attains a negative minimum at $y^*\in \gcone$, then $\tau\cdot D u(y^*)=0$, $\beta_0\cdot Du(y^*)\geq0$ (as $\be_0$ is inward pointing). So $$\be_0\cdot D u(y^*)+\tilde{a}\tau\cdot D u(y^*)-\frac{\tilde b}{y^*_2}u(y^*)\geq-\frac{\tilde b}{y^*_2}u(y^*) >0,$$
contradicting the boundary condition.

In the second case, \eqref{eq:compare2}, we observe first that on the set where $u<0$, the partial differential inequality may be strengthened to the strict inequality $L_0u<0$. Hence, returning to the domain $\Om$, we have that $u$ satisfies this strict inequality for the uniformly elliptic operator $\overline{L}_0$, hence cannot attain a negative minimum in the interior of the set $\Om\cap\{u<0\}$. Moreover, by the boundary condition on $\overline{\gsymm\cup\gball}$, $u$ cannot attain a negative minimum on those portions of the boundary either. Finally, we follow the argument above to conclude that $u$ cannot attain a negative minimum on $\gcone$.
\end{proof}

\section{Counterexamples and Proof of Theorem \ref{thm:counterexample}}\label{sec:counter2}
We work on a domain $\Omega$, a cone with boundary $\Gcone$. For simplicity, we work in $\R^3$ and suppose that the axis of the cone is in the $x_3$ direction. In standard spherical coordinates $(r,\th,\vphi)$, we have that
$$\Omega=\{(r,\th,\vphi)\,|\,r>0,\,0\leq\th<\th_0,\,\vphi\in[0,2\pi)\},$$
where $\th_0\in(0,\pi)$. In this notation, the cylindrical symmetry coordinates for axi-symmetric functions correspond to $y_1=r\cos\th$, $y_2=r\sin\th$. We denote by $\LP_\al$ the Legendre polynomial of degree $\al$.
\begin{thm}
Consider the oblique derivative problem with axi-symmetric oblique vector
\beq\label{eq:purelaplace}
\begin{cases}
\Delta u=0 &\text{ in }\Om,\\
\be\cdot Du=0 &\text{ on }\Gcone,
\end{cases}
\eeq
where, in the (cylindrical) symmetry coordinates $(y_1,y_2)$, $\be=(\cos(s),\sin(s))$, $s\in(-\pi+\th_0,\th_0)$.
\begin{enumerate}
\item\label{itm:1} Suppose $\th_0\in(\frac{\pi}{2},\pi)$. Then there exists $s_0\in(-\pi+\th_0,0)$ such that if either 
$$s\in(-\pi+\th_0,s_0)\quad\text{ or }\quad s\in(\frac{\pi}{2},\th_0),$$
then there exists $\al=\al(\th_0,s)\in(0,1)$ such that 
$$u(r,\th)=r^\al\LP_\al(\cos\th)$$
is an axi-symmetric solution of \eqref{eq:purelaplace} that lies in $C^{0,\al}\setminus C^{0,\al+\epsilon}$ for any $\epsilon>0$. 
\item\label{itm:2} Suppose $\th_0\in(0,\frac{\pi}{2})$. Then there exists $s_0\in(-\frac{\pi}{2},0)$ such that if $s\in(-\frac{\pi}{2},s_0)$, then there exists $\al=\al(\th_0,s)\in(0,1)$ such that 
$$u(r,\th)=r^\al\LP_\al(\cos\th)$$
is an axi-symmetric solution of \eqref{eq:purelaplace} that lies in $C^{0,\al}\setminus C^{0,\al+\epsilon}$ for any $\epsilon>0$. 
\end{enumerate} 
\end{thm}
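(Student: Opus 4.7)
The plan is to use separation of variables for the Laplacian in spherical coordinates, thereby reducing the problem to a transcendental equation in $\al$, and then to apply the intermediate value theorem in each specified range of $(s,\th_0)$.

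First I would verify the ansatz $u(r,\th) = r^\al \LP_\al(\cos\th)$. The axi-symmetric Laplacian $\Delta = r^{-2}\d_r(r^2\d_r) + (r^2\sin\th)^{-1}\d_\th(\sin\th\,\d_\th)$ annihilates $r^\al Y(\th)$ iff $Y$ satisfies Legendre's equation with parameter $\al(\al+1)$, which $\LP_\al(\cos\th)$ does by construction. The symmetry condition on $\gsymm$ reduces to $u_\th(r,0) = 0$, which holds because $u_\th = -r^\al\sin\th\,\LP_\al'(\cos\th)$ vanishes at $\th = 0$. Rewriting $u_{y_1} = \cos\th\,u_r - r^{-1}\sin\th\,u_\th$, $u_{y_2} = \sin\th\,u_r + r^{-1}\cos\th\,u_\th$ gives $\be\cdot Du = \cos(s-\th)\,u_r + r^{-1}\sin(s-\th)\,u_\th$, so at $\th = \th_0$ the boundary condition becomes the transcendental equation
\begin{equation*}
F(\al) := \al\cos(s-\th_0)\LP_\al(\cos\th_0) - \sin(s-\th_0)\sin\th_0\,\LP_\al'(\cos\th_0) = 0.
\end{equation*}

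Next, compute the behavior of $F$ at the endpoints of $[0,1]$. One has $F(0) = 0$ trivially, and $F(1) = \cos(s-\th_0)\cos\th_0 - \sin(s-\th_0)\sin\th_0 = \cos s$. The more delicate point is the slope at $\al = 0$: writing $\LP_\al(x) = 1 + \al\phi_1(x) + O(\al^2)$ and inserting into the Legendre equation gives $((1-x^2)\phi_1')' = -1$, and regularity at $x = 1$ (where $\LP_\al(1)\equiv 1$) forces $\phi_1(x) = \ln\tfrac{1+x}{2}$. Hence $\LP_\al'(\cos\th_0) = \al/(1+\cos\th_0) + O(\al^2)$ and, using $\sin\th_0/(1+\cos\th_0) = \tan(\th_0/2)$,
\begin{equation*}
F'(0) = \cos(s-\th_0) - \sin(s-\th_0)\tan(\th_0/2) = \frac{\cos(s-\th_0/2)}{\cos(\th_0/2)}.
\end{equation*}

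I would then conclude via the intermediate value theorem applied to $\tilde F(\al) := F(\al)/\al$, continuously extended by $\tilde F(0) = F'(0)$: whenever $F'(0)$ and $\cos s$ have opposite signs, a root $\al \in (0,1)$ of $F$ exists. In case (i)(a), $s \in (\pi/2,\th_0) \subset (\pi/2,\pi)$ yields $s - \th_0/2 \in (\pi/2-\th_0/2,\th_0/2) \subset (0,\pi/2)$, so $F'(0) > 0$ while $\cos s < 0$. Setting $s_0 := \th_0/2 - \pi/2$ (which lies in $(-\pi+\th_0,0)$ in case (i) and in $(-\pi/2,0)$ in case (ii)), the remaining sub-case (i)(b) and case (ii) follow from the observation that for $s$ in the specified negative ranges one has $s - \th_0/2 \in (-\pi,-\pi/2)$ so $F'(0) < 0$, while $s \in (-\pi/2,0)$ gives $\cos s > 0$. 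This yields $\al = \al(s,\th_0) \in (0,1)$.

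The claimed regularity then follows: since $\LP_\al(1) = 1$ and $\al \in (0,1)$, the solution is nontrivial and satisfies $u(r,0) = r^\al$, which lies in $C^{0,\al}$ but in no better Hölder space; moreover $|Du| \sim r^{\al-1}$ is unbounded, so $u$ is not $C^1$ at the origin. The main obstacle is identifying the first-order correction $\phi_1(x) = \ln\tfrac{1+x}{2}$ in the expansion of $\LP_\al$ near $\al = 0$, together with carefully tracking signs across the two geometric regimes $\th_0 \gtrless \pi/2$; once this is in hand, the remainder is a routine trigonometric verification.
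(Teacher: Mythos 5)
Your proof is correct and follows the same overall strategy as the paper (reduce the boundary condition to a scalar equation in $\al$, check the endpoint values $F(0)=0$ and $F(1)=\cos s$, compute the slope $F'(0)$, and apply the intermediate value theorem), but the route to $F'(0)$ and the resulting expression are cleaner. Where you expand directly in polar coordinates to get $\be\cdot Du = \cos(s-\th)u_r + r^{-1}\sin(s-\th)u_\th$ and then obtain $\d_\al\LP_\al|_{\al=0}(x)=\ln\frac{1+x}{2}$ by a perturbation expansion of the Legendre ODE, the paper works in the cylindrical coordinates $(y_1,y_2)$ and invokes a recursion identity of Szmytkowski for $\d_\al\LP_\al$, arriving at $V(\th_0,s)=\cos s + \sin s\,\tfrac{1-\cos\th_0}{\sin\th_0}$. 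These are the same quantity: $V(\th_0,s)=\tfrac{\cos(s-\th_0/2)}{\cos(\th_0/2)}$, which is precisely your $F'(0)$. What the closed form buys you is an explicit threshold $s_0=\th_0/2-\pi/2$ in all cases and a one-line sign analysis; the paper leaves $s_0$ implicit (as the zero of $V$) in case (1) and resorts to a continuity argument in case (2). One small point to be precise about: the IVT is applied to $\tilde F(\al)=F(\al)/\al$, and the continuous extension $\tilde F(0)=F'(0)$ should be justified by the analyticity of $\al\mapsto\LP_\al(x)$ for fixed $x\in(-1,1)$ (which holds, but is worth a sentence). Otherwise the derivation of $\phi_1(x)=\ln\frac{1+x}{2}$ from $((1-x^2)\phi_1')'=-1$ together with regularity at $x=1$ and $\LP_\al(1)\equiv1$ is exactly right, and the sign computations in all three regimes check out.
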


We note that in case (2), the obtained solution is strictly positive away from the origin. Numerics suggest that this is also the case for the solutions obtained for case (1).

\begin{proof}
We begin by seeking a separable, axi-symmetric solution of the Laplace equation, that is, a solution of the form
$$u(r,\th,\vphi)=R(r)\Theta(\th).$$
By standard ODE arguments, we then arrive at the function
$$u_\al(r,\th,\vphi):=r^{\al}\LP_{\al}(\cos\th),$$
where $\LP_\al$ is the Legendre polynomial of degree $\al$. This function is easily seen to satisfy the PDE of \eqref{eq:purelaplace} (see Appendix \ref{sec:counter1} below for more details on the derivation). To give a solution in full space, $u_\al$ must also satisfy the symmetry requirement that $\Theta'(0)=0$. This is easy to verify as 
$$\Theta'(0)=\lim_{\th\to0+}\big(-\sin\th\LP_\al'(\cos\th)\big)=\lim_{z\to1-}\Big(-\sqrt{1-z^2}\frac{(\al + 1) (z \LP_\al(z) - \LP_{\al + 1}(z))}{1-z^2}\Big)=0,$$
where we have applied the identity of \cite[(14.10.4)]{DLMF} for the derivative and \cite[(14.8.1)]{DLMF} for the limit.

We therefore search for $\al$ solving the oblique derivative condition. To that end, we move into the cylindrical coordinates $y=(y_1,y_2)$ and  apply again the identity of \cite[(14.10.4)]{DLMF},
$$\LP_\al'(z)=(\al+1)\frac{z\LP_\al(z)-\LP_{\al+1}(z)}{1-z^2},$$
to check
\beqas
(u_\al)_{y_1}= r^{\al-1}\big((2\al+1)\cos\th\LP_\al(\cos\th)-(\al+1)\LP_{\al+1}(\cos\th)\big),
\eeqas
and
\beqas
(u_\al)_{y_2}= r^{\al-1}\big(\sin\th\big(\al-(\al+1)\frac{\cos^2\th}{\sin^2\th}\big)\LP_\al(\cos\th)+(\al+1)\frac{\cos\th}{\sin\th}\LP_{\al+1}(\cos\th)\big).
\eeqas
As both of these derivatives separate, we define their angular components
 \beqas
  U_1(\th,\al)=&\,(2\al+1)\cos\th\LP_\al(\cos\th)-(\al+1)\LP_{\al+1}(\cos\th),\\
  U_2(\th,\al)=&\,\sin\th\big(\al-(\al+1)\frac{\cos^2\th}{\sin^2\th}\big)\LP_\al(\cos\th)+(\al+1)\frac{\cos\th}{\sin\th}\LP_{\al+1}(\cos\th).
  \eeqas
Solving the oblique condition is therefore equivalent to finding $\al$ such that 
\beqas
B(\th_0,\al,s):=&\,\cos(s)U_1(\th_0,\al)+\sin(s)U_2(\th_0,\al)=0.
\eeqas
 From the identities $\LP_0(z)=1$, $\LP_1(z)=z$, $\LP_2(z)=\frac{3z^2-1}{2}$ for all $z\in[-1,1]$, one easily checks that, for all $\th\in(0,\pi)$,
  $$\lim_{\al\to0+}U_1(\th,\al)=\lim_{\al\to0+}U_2(\th,\al)=0,$$
  and also
  $$\lim_{\al\to1-}U_1(\th,\al)=1,\quad \lim_{\al\to 1-}U_2(\th,\al)=0.$$
  Thus 
  $$B(\th_0,0,s)=0,\quad B(\th_0,1,s)=\cos(s),\quad\text{ for all }\th_0\in(0,\pi).$$
\textbf{Proof of \eqref{itm:1}.}\\
\textit{Case 1:} $s\in(-\pi+\th_0,0)$. Then we have that $B(\th_0,1,s)=\cos(s)>0$.
  We prove that there exists $s_0(\th_0)\in(-\pi+\th_0,0)$ such that for $s\in(-\pi+\th_0,s_0)$ we have that
  $$\frac{\d B}{\d\al}(\th_0,\al,s)\big|_{\al=0}<0.$$
 Assuming the existence of such an $s_0$, we clearly obtain for all $s\in(-\pi+\th_0,s_0)$ an $\al=\al(\th_0,s)\in(0,1)$ such that $B(\th_0,\al,s)=0$ as claimed in the theorem.
 
   We begin by computing the derivatives of $U_1$, $U_2$ with respect to $\al$. 
   \beqas
  \frac{\d U_1}{\d\al}=&\,2\cos\th_0\LP_{\al}(\cos\th_0)-\LP_{\al+1}(\cos\th_0)\\
  &+(2\al+1)\cos\th_0\d_\al\LP_{\al}(\cos\th_0)-(\al+1)\d_\al\LP_{\al+1}(\cos\th_0),\\
  \frac{\d U_2}{\d\al}=&\,\sin\th_0\big(1-\frac{\cos^2\th_0}{\sin^2\th_0}\big)\LP_{\al}(\cos\th_0)+\frac{\cos\th_0}{\sin\th_0}\LP_{\al+1}(\cos\th_0)\\
  &+\sin\th_0\big(\al-(\al+1)\frac{\cos^2\th_0}{\sin^2\th_0}\big)\d_\al\LP_{\al}(\cos\th_0)+(\al+1)\frac{\cos\th_0}{\sin\th_0}\d_\al\LP_{\al+1}(\cos\th_0).
  \eeqas
  From  \cite[(4.18)]{Sm06}, we have the identity
  \beq
  (\al+1)\frac{\d\LP_{\al+1}(z)}{\d\al}-(2\al+1)z\frac{\d\LP_\al(z)}{\d\al}+\al\frac{\d\LP_{\al-1}(z)}{\d\al}=-\LP_{\al+1}(z)+2z\LP_\al(z)-\LP_{\al-1}(z).
  \eeq
  Thus
  \beqas
  \frac{\d U_1}{\d\al}\big|_{\al=0}=&\,2\cos\th_0\LP_{0}(\cos\th_0)-\LP_{1}(\cos\th_0)\\
  &+\cos\th_0\d_\al\LP_{\al}(\cos\th_0)\big|_{\al=0}-\d_\al\LP_{\al+1}(\cos\th_0)\big|_{\al=0}\\
  =&\,\LP_{-1}(\cos\th_0)=1,
    \eeqas
    where we have used in the last line that $\LP_{-1}(z)=1$ for all $z$.
    
    Similarly,
    \beqas
     \frac{\d U_2}{\d\al}\big|_{\al=0}=&\,\sin\th_0\big(1-\frac{\cos^2\th_0}{\sin^2\th_0}\big)\LP_{0}(\cos\th_0)+\frac{\cos\th_0}{\sin\th_0}\LP_{1}(\cos\th_0)\\
     &-\frac{\cos^2\th_0}{\sin\th_0}\d_\al\LP_{\al}(\cos\th_0)\big|_{\al=0}+\frac{\cos\th_0}{\sin\th_0}\d_\al\LP_{\al+1}(\cos\th_0)\big|_{\al=0}\\
     =&\,\sin\th_0+\frac{\cos\th_0}{\sin\th_0}\big(-\LP_1(\cos\th_0)+2\cos\th_0\LP_0(\cos\th_0)-\LP_{-1}(\cos\th_0)\big)\\
     =&\,\sin\th_0+\frac{\cos\th_0}{\sin\th_0}(\cos\th_0-1)=\frac{1-\cos\th_0}{\sin\th_0}.
    \eeqas
    Hence we find
    \beq\label{eq:7.3}
    \frac{\d B}{\d\al}(\th_0,\al,s)\big|_{\al=0}=\cos(s)+\sin(s)\frac{1-\cos\th_0}{\sin\th_0}=:V(\th_0,s).
    \eeq
    One sees easily that for each fixed $\th_0\in(\frac{\pi}{2},\pi)$, $V$ is a strictly increasing function of $s$ on $(-\pi+\th_0,0)$ such that $V(\th_0,-\pi+\th_0)=-1$ and $V(\th_0,0)=1$. We choose $s_0(\th_0)$ to be the solution of $V(\th_0,s_0(\th_0))=0$ on this interval, so that for all $s\in(-\pi+\th_0,s_0)$, 
    $$\frac{\d B}{\d\al}(\th_0,\al,s)\big|_{\al=0}=V(\th_0,s)<0,$$
    as required.\\
    \textit{Case 2:} $s\in(\frac{\pi}{2},\th_0)$. 
    In this  case, we check from the above identities that $B(\th_0,0,s)=0$, $B(\th_0,1,s)=\cos(s)<0$ and, from the  formula \eqref{eq:7.3}, find also
     $$\frac{\d B}{\d\al}(\th_0,\al,s)\big|_{\al=0}=V(\th_0,s)>0\quad\text{ for all }\th_0\in(\frac{\pi}{2},\pi),\: s\in(\frac{\pi}{2},\th_0),$$ so that again there exists $\al=\al(\th_0,s)\in(0,1)$ such that $B(\th_0,\al,s)=0$, concluding the proof.\\
     \textbf{Proof of \eqref{itm:2}.}\\
     This proceeds in much the same manner as the above.
      It suffices to consider first that 
      $$B(\th_0,0,s)=0,\quad B(\th_0,1,s)=\cos(s)>0\text{ for }s\in(-\frac{\pi}{2},0),$$
     and again observe that the derivative 
      $$\frac{\d B}{\d\al}(\th_0,\al,-\frac{\pi}{2})\big|_{\al=0}=V(\th_0,-\frac{\pi}{2})<0\quad\text{ for all }\th_0\in(0,\frac{\pi}{2}),$$
      where $V(\th_0,-\frac{\pi}{2})<0$ follows directly from \eqref{eq:7.3}. 
      By continuity of $V$ with respect to $s$, we therefore obtain for each $\th_0\in(0,\frac{\pi}{2})$, an $s_0(\th_0)\in(-\frac{\pi}{2},0)$ such that for all $s\in(-\frac{\pi}{2},s_0)$, $V(\th_0,s_0)<0$, and conclude as before.
    \end{proof}

\appendix
\section{A counterexample to regularity in the absence of axi-symmetry}\label{sec:counter1}
Finally, we show that if the assumption of axi-symmetry is dropped, then the H\"older regularity may fail at the vertex. Such a result appears to be well-known among experts (see e.g.~\cite{KMR}), but we have been unable to find a rigorous proof of this fact.
We take $\Om$ and its boundary as in the previous section, with $\th_0\in(0,\pi)$, and we again work with the Laplace equation 
$$\Delta u=0 \text{ in }\Omega,$$
this time with the Neumann boundary condition,
$$\frac{\d u}{\d \nu}=0 \text{ on }\Gcone.$$
As we are no longer looking solely at axi-symmetric solutions, we write out more carefully the separation of variables. In spherical coordinates, the Laplacian takes the following form:
$$\Delta u=\frac{1}{r^2}\d_r(r^2 \d_r u)+\frac{1}{r^2\sin\th}\d_\th(\sin\th \d_\th u)+\frac{1}{r^2\sin^2\th}\d^2_{\vphi\vphi} u.$$
We make a separation of variables ansatz:
$$u(r,\th,\vphi)=R(r)\Theta(\th)\Phi(\vphi).$$
Then we obtain, after multiplying through by $r^2\sin^2\th /(R\Theta\Phi)$,
$$\frac{\sin^2\th}{R}\d_r(r^2R'(r))+\frac{\sin\th}{\Theta}\d_\th(\sin\th\Theta'(\th))+\frac{1}{\Phi}\Phi''(\vphi)=0.$$
Hence $\Phi^{-1}\Phi''(\vphi)$ is a constant. For periodic solutions, we need 
$$\Phi^{-1}\Phi''(\vphi)=-m^2,$$
where $m\in\Z$ (without loss of generality, $m\geq 0$). This gives us the obvious solution
$$\Phi(\vphi)=C\sin(m\vphi)+D\cos(m\vphi).$$
We therefore have axi-symmetry in the case $m=0$ (solution independent of $\vphi$) and do not have axi-symmetry in the case $m>0$. 
Substituting back in, we arrive at
$$\frac{\sin^2\th}{R}\d_r(r^2R'(r))+\frac{\sin\th}{\Theta}\d_\th(\sin\th\Theta'(\th))-m^2=0,$$
which is, after further separation of variables,
$$\frac{1}{R}\d_r(r^2R'(r))=\la,\quad \frac{1}{\sin\th \Theta}\d_\th(\sin\th\Theta'(\th))-\frac{m^2}{\sin^2\th}=-\la.$$
Solving for $R$, we obtain
$$R(r)=r^\al,\quad \la=\al(\al+1).$$ We are interested in solutions which are bounded at the vertex, hence we take $\al>0$, so $\la>0$ also. We must therefore study the following equation:
$$\frac{1}{\sin\th }\d_\th(\sin\th\Theta'(\th))+\big(\al(\al+1)-\frac{m^2}{\sin^2\th}\big)\Theta=0.$$
Defining $z=\cos\th$, we let $w(z)=\Theta(\th)$ and arrive at the associated Legendre equation:
\beq\label{eq:Legendre}
\frac{d}{dz}\big((1-z^2)\frac{d w}{dz}\big)+\big(\al(\al+1)-\frac{m^2}{1-z^2}\big)w=0.
\eeq
The solutions to equation \eqref{eq:Legendre} are the associated Legendre polynomials $\mathrm{P}^m_\al(z)$, well-defined for $z\in(-1,1)$ (i.e.~$\cos\th\in(-1,1)$, which is the range we need). 

For our purposes, it is sufficient to consider only the case $m=1$, i.e.~the first case without axi-symmetry. We then obtain solutions of Legendre's equation $\LP^1_\al(z)$ where $\al>0$. To solve the Neumann boundary condition, we look for $\al>0$ satisfying
$$(\LP^1_\al)'(\cos\th_0)=0.$$
If there exists a unique $\al\in(0,1)$ for each $\th_0\in(0,\pi)$ satisfying this equation, then
$$u(r,\th,\vphi)=r^{\al}\LP^1_{\al}(\cos\th)(C\sin\vphi+D\cos\vphi)$$
is a $C^{0,\al}$ solution that is not $C^{0,\al+\epsilon}$ up to the origin (provided at least one of $C$ and $D$ is non-zero). Such a solution is known in, for example, \cite[p.47]{KMR}. However, no analytic proof is given there, only a numerical plot. In the following, we therefore give a rigorous proof of the existence of such an $\al$.

We begin by recalling from \cite[(14.10.4)]{DLMF} that 
$$(\LP_\al^1)'(\cos\th_0)=\frac{-\al\LP_{\al+1}^1(\cos\th_0)+(\al+1)\LP_\al^1(\cos\th_0)}{\sin^2\th_0}=:W(\th_0,\al).$$
Moreover, from the recursive identity $\LP_\al^1(z)=-(1-z^2)^{\frac 1 2}\frac{d\LP_\al(z)}{dz}$ (e.g.~\cite[(14.6.1)]{DLMF}), we have
$$\LP_0^1(z)=0,\quad \LP_1^1(z)=-(1-z^2)^{\frac12},\quad\LP_2^1(z)=-3z(1-z^2)^{\frac 12}.$$
 Thus, evaluating $W$ at $\al=0,1$, we find
 $$W(\th_0,0)=0,\quad W(\th_0,1)=\frac{3\cos\th_0(1-\cos^2\th_0)^{\frac 12}-2\cos\th_0(1-\cos^2\th_0)^{\frac 12}}{\sin^2\th_0}=\frac{\cos\th_0}{\sin\th_0}<0.$$
 We now differentiate with respect to $\al$ to show that $\frac{\d W}{\d\al}|_{\al=0}>0$, thus allowing us to conclude. We begin by noting the following recurrence identity for derivatives of associated Legendre polynomials with respect to degree at integer order from \cite[(2.9)]{Sm11}: for $n,m\in\Z$,
 \beq
 (1-z^2)^{\half}\frac{\d\LP_\al^{m+1}}{\d\al}\big|_{\al=n}-(n-m)z\frac{\d\LP_\al^m(z)}{\d\al}\big|_{\al=n}+(n+m)\frac{\d\LP_\al^m(z)}{\d\al}\big|_{\al=n-1}=z\LP_n^m(z)-\LP_{n-1}^m.
 \eeq
 Applying this identity with $m=n=0$, we have 
 \beqas
 \frac{\d W}{\d\al}\big|_{\al=0}=&\,{\cosec}^2\th_0\Big(-\LP_1^1(\cos\th_0)+\cos\th_0\LP_0^1(\cos\th_0)+\cos\th_0\frac{\d\LP_\al^1(\cos\th_0)}{\d\al}\big|_{\al=0}\Big)\\
 =&\,{\cosec}^2\th_0\Big(\frac{\cos\th_0}{\sin\th_0}(\cos\th_0-1)-\LP_1^1(\cos\th_0)+\cos\th_0\LP_0^1(\cos\th_0)\big)\\
 =&\,{\cosec}^2\th_0\big(\frac{\cos\th_0}{\sin\th_0}(\cos\th_0-1)+\sin\th_0\big)\\
 =&\,{\cosec}^2\th_0\big(\frac{1-\cos\th_0}{\sin\th_0}\big)>0.
 \eeqas
 Thus, for each $\th_0\in(0,\pi)$, $W(\al,\th_0)$ has a root with respect to $\al$ in the interval $(0,1)$, and hence the Neumann problem for the Laplace equation admits a solution which is H\"older continuous but not Lipschitz up to the vertex.\\

\noindent\textbf{Acknowledgement.} The author wishes to thank Mikhail Feldman for his interest and advice during the writing of this paper.

 \end{document}